\setlist[description]{style=nextline,labelwidth=0pt,leftmargin=30pt,itemindent=\dimexpr-20pt-\labelsep\relax}
\let\orgdescriptionlabel\descriptionlabel
\renewcommand*{\descriptionlabel}[1]{%
  \let\orglabel\label
  \let\label\@gobble
  \phantomsection
  \edef\@currentlabel{#1}%
  \let\label\orglabel
  \orgdescriptionlabel{#1}%
}
\newcommand{\pure}[1]{\pi(#1)}
\newcommand{\dd}{\mathbf d}
\newcommand{\ee}{\mathbf e}
\newcommand{\bet}{\beta} 
\newcommand{\betti}[1]{\bet{(#1)}} 
\DeclareMathOperator{\concat}{concat}
\newcommand{\A}{a} 
\newcommand{\elimo}{\mathcal E} 
\newcommand{\elims}{\varepsilon} 
\newcommand{\elimorder}[1]{\elimo (#1)} 
\newcommand{\elimsize}[1]{\elims (#1)} 
\newcommand{\defi}[1]{\textsf{#1}} 
\def\adots{\mathinner{\mkern1mu\raise\p@
\vbox{\kern7\p@\hbox{.}}\mkern2mu
\raise4\p@\hbox{.}\mkern2mu\raise7\p@\hbox{.}\mkern1mu}}
\newcommand{\twiddle}{{\raise.17ex\hbox{$\scriptstyle\sim$}}}
\newcommand{\kk}{\Bbbk}
\newtheorem{theorem}{Theorem}[section]
\newtheorem{thm}[theorem]{Theorem}
\newtheorem{prop}[theorem]{Proposition}
\newtheorem{cor}[theorem]{Corollary}
\newtheorem{algorithm}[theorem]{Algorithm}
\newtheorem{lemma}[theorem]{Lemma}
\newtheorem{lem}[theorem]{Lemma}
\newtheorem{conjecture}[theorem]{Conjecture}
\theoremstyle{definition}
\newtheorem{definition}[theorem]{Definition}
\newtheorem{dfn}[theorem]{Definition}
\newtheorem{example}[theorem]{Example}
\newtheorem{remark}[theorem]{Remark}
\newcommand{\ds}{\displaystyle}
\newcommand{\ul}{\underline}
\newcommand{\Z}{\mathbb{Z}}
\newcommand{\Q}{\mathbb{Q}}
\newcommand{\Reg}{\mathrm{reg}}
\newcommand{\Hom}{\mathrm{Hom}}
\newcommand{\pdim}[1]{\mathrm{pd}(#1)}
\newcommand{\codim}{\mathrm{codim}}
\newcommand{\BS}{Boij-S\"oderberg\ }
\newcommand{\ls}{\leq}
\newcommand{\lst}[2]{#1_1,\dots,#1_{#2}}
\newcommand{\reg}{\mathrm{reg}}
\newcommand\encircle[1]{%
  \tikz[baseline=(X.base)] 
    \node (X) [draw, shape=circle, fill=red!30!white, inner sep=0] {\strut #1};}
\newcommand\enwithdiamond[1]{%
  \tikz[baseline=(X.base)] 
    \node (X) [draw, shape=diamond, fill=blue!30!white, inner sep=0] {\strut #1};}    
\newcommand\ensquare[1]{%
  \tikz[baseline=(X.base)] 
    \node (X) [draw, shape=regular polygon, regular polygon sides=4, fill=green!40!gray, inner sep=-1] {\strut #1};}
\newcommand\entriangle[1]{%
  \tikz[baseline=(X.base)] 
    \node (X) [draw, shape=regular polygon, regular polygon sides=5, fill=brown!30!white, inner sep=-1] {\strut #1};}
\begin{document}

\title[Recursive Decompositions]{Recursive strategy for decomposing Betti tables of complete intersections}

\author[Gibbons]{Courtney R. Gibbons}
\address{Courtney R. Gibbons, Mathematics Department, Hamilton College, 198 College Hill Road, Clinton, NY 13323}
\email{crgibbon@hamilton.edu}

\author[Huben]{Robert Huben}
\address{Robert Huben, Department of Mathematics, University of Nebraska--Lincoln, P.O. Box 880130, Lincoln, NE 68588-0130}
\email{rhuben@huskers.unl.edu}

\author[Stone]{Branden Stone}
\address{Branden Stone, Department of Mathematics and Computer Science, Adelphi University, 1 South Avenue, Garden City, NY 11530-0701}
\email{bstone@adelphi.edu}

\begin{abstract} 
We introduce a recursive decomposition algorithm for the Betti diagram of a complete intersection using the diagram of a complete intersection defined by a subset of the original generators.  This alternative algorithm is the main tool that we use to investigate stability and compatibility of the Boij-S\"oderberg decompositions of related diagrams; indeed, when the biggest generating degree is sufficiently large, the alternative algorithm produces the Boij-S\"oderberg decomposition.  We also provide a detailed analysis of the Boij-S\"oderberg decomposition for Betti diagrams of codimension four complete intersections where the largest generating degree satisfies the size condition.
\end{abstract}

\subjclass[2010]{Primary: 13D02; Secondary: 13C99}
\keywords{Boij-S\"oderberg Theory, Betti diagrams, complete intersections, decomposition algorithms}

\maketitle

\section{Introduction}\label{sec:sec1}

Since its conception \cite{BS1,ES1}, \BS theory has blossomed into an active area of research in commutative algebra.
One part of the dual nature of this theory allows us to analyze numerical invariants of graded finite free resolutions.  Applications include the proof of the multiplicity conjecture \cite{HS,ES1}, a special case of Horrocks' conjecture \cite{ermanHorrocks}, and constraints on regularity \cite{McC}.  There are current efforts to extend \BS theory to Grassmannians \cite{samGrass} as well as expository notes on open questions and the state of the field \cite{ES,floystad-expos}. In the case of complete intersections, it was shown in \cite{reu} that the diagrams of complete intersections behave similarly to pure diagrams, creating a non-trivial sub-cone of the Boij-S\"oderberg cone. Further, in \cite{GJMRSW}, a complete structure theorem is given for complete intersections of codimension at most three. Recent work shows decompositions of homogeneous ideal powers stabilize in a meaningful way \cite{mayestang,whieldon}, and in \cite{NS}, the authors give some combinatorial interpretations of the decompositions produced by the Boij-S\"oderberg decomposition algorithm.

Despite all of this, not much is generally known about the structure of the decompositions for specific classes of modules; our understanding of the structure of Boij-S\"oderberg decompositions remains extremely limited.  In fact, even studying the Boij-S\"oderberg decomposition of a complete intersection defined by forms of degrees $\lst a c$ raises nuanced questions, and the decomposition depends on delicate relations among the $a_i$.  Our main result provides new insight into the way that these relations--in particular, the relationship between the largest degree to the smaller degrees--affect the decomposition.

Consider a homogeneous complete intersection ideal $I$ in a polynomial ring $S = \kk[\lst x n]$.  The goal of this paper is to find a relationship between the decompositions of diagrams $\bet(S/I)$ and $\bet(S/I\otimes \kk[y]/(g))$. In particular, Theorem~\ref{thm:recursive-stuff} indicates that the decomposition $\bet(S/I\otimes \kk[y]/(g))$ can be obtained from the decomposition of the diagram $\bet(S/I)$ under certain conditions, including a lower bound on the degree of $g$. The main tool we use to link the decompositions of these two diagrams is an alternative decomposition technique introduced in Algorithm~\ref{NewAlgorithm}. We discuss this new algorithm in Section~\ref{sec:newalg} while the rest of Section~\ref{sec:sec1} develops the necessary notation and tools. In particular, Subsection~\ref{intro:eliminorder} develops the concept of an ``elimination order'' as a prelude to the new algorithm. Section~\ref{sec:stable} shows that the Boij-S\"oderberg decomposition of $\bet(S/I\otimes \kk[y]/(g))$ stabilizes similarly to the ideal powers seen in \cite{mayestang}. A case study in codimension four is given in Section~\ref{sec:codim4}, extending the results of \cite{GJMRSW} and giving a partial structure theorem for the Boij-S\"oderberg decomposition of a complete intersection in codimension four.

\subsection{Notation}\label{intro:notation}

Over the polynomial ring $S = \kk[\lst x n]$, every finite-length $S$-module $M$ has a minimal graded free resolution of the form
\[
	\xymatrixrowsep{5mm}
	\xymatrixcolsep{5mm}
	\xymatrix
		{
			0  & M  \ar[l] & \ds \bigoplus_j S(-j)^{\beta_{0,j}} \ar[l] & \ds \bigoplus_j S(-j)^{\beta_{1,j}} \ar[l] & \cdots \ar[l] & \ds \bigoplus_j S(-j)^{\beta_{p,j}} \ar[l] & 0 \ar[l]
		}
\]
where the projective dimension $p$ is at most $n$ by Hilbert's Syzygy Theorem. The integer $\beta_{i,j}$ is the number of degree $j$ generators of a basis of the free module in the $i^{\text{th}}$ step of the resolution.  These $\beta_{i,j}$ are independent of the choice of minimal free resolution, and they are called the \defi{graded Betti numbers}.  The \defi{Betti diagram} of $M$ is defined to be 
\[
	\beta(M) = \begin{pmatrix}
		\vdots & \vdots & \vdots & \adots & \\
		\beta_{0,{-1}} & \beta_{1,0} & \beta_{2,1} & \cdots &\\
		\beta_{0,0} & \beta_{1,1} & \beta_{2,2} & \cdots & \\
		\beta_{0,1} & \beta_{1,2} & \beta_{2,3} & \cdots & \\
		\vdots & \vdots & \vdots &\ddots &
	\end{pmatrix}.
\]
Throughout we consider $\beta(M)$ as an element of the vector space \[V = \bigoplus_{i=0}^n \bigoplus_{j \in \Z} \Q.\] If $D \in V$, then we say that $D$ is a \defi{diagram}. 

In this paper, we are concerned with Betti diagrams of homogeneous complete intersection modules over the ring $S$ viewed as a standard graded $\kk$-algebra where $n \gg 0$.  Such a module is a quotient of $S$ by an ideal $I$ generated by a regular sequence $\lst f c$, and its free resolution is given by the Koszul complex $K_{\bullet}(\lst f c) = K_{\bullet}(f_1) \otimes \cdots \otimes K_{\bullet}(f_c)$. Because tensor products commute, we may assume that $I = (f_1,\ldots,f_c)$  is written in such a way that the degrees of the forms $f_i$ are nondecreasing from left to right. In particular, letting $a_i = \deg f_i$, the combinatorial construction of the Koszul complex makes it easy to verify that $\beta_{i,j}(S/I)$ is the number $i$-element subsets of $\{a_1,\ldots,a_c\}$ that sum to $j$. As such, we simplify notation by setting
\[
	\beta(a_1,\ldots,a_c) := \beta(S/I).  
\]

Observe that $\beta_{0,0}(a_1,\ldots,a_c) = \beta_{0,0}(S/I) = 1$. Moreover, the projective dimension and regularity can be calculated as, respectively, 
\[
	\pdim{S/I} = \codim(S/I) = c \quad \text{and} \quad \reg(S/I) = \sum_{k=1}^c a_k - c.
\]

\subsection{Boij-S\"oderberg Theory}\label{intro:BS}

Let $M$ be an $S$-module of finite length.	 We say $\dd \in \Z^{n+1}$ is a \defi{degree sequence} if $d_i < d_{i+1}$ for all $i$ and that $\dd \ls \dd'$ if $d_i \ls d_i'$ for all $i$.  A \defi{chain of degree sequences} is a totally ordered collection $\{\cdots < \dd^0 < \dd^1 < \cdots < \dd^s < \cdots \}$.  Viewing $\beta(M)$ as a diagram, we say that $\beta(M)$ is a \defi{pure diagram} if $\beta(M)$ has at most one non-zero entry in each column.  For example, if $S = \kk [x,y,z]$ and $M = \kk [x,y,z]/(x^2,y^2,z^2)$ then 
	\[
		\beta(M) = \begin{pmatrix}
			1 & - & - & - \\
			- & 3 & - & - \\
			- & - & 3 & - \\
			- & - & - & 1
		\end{pmatrix}
	\]
is a pure diagram. If $\beta(M)$ is a pure diagram, then for each nonnegative integer $i \leq \pdim M$ there exists an integer $d_i$ for which $\beta_{i,j}(M) \not= 0$ if and only if $j = d_i$.  In this case we say that $\beta(M)$ is a \defi{pure diagram of type $\dd = (d_0,d_1, \dots, d_n)$}.  Further, if $\dd$ is a degree sequence and $M$ is any finite length module with a pure diagram of type $\dd$, then entries of $\beta(M)$ are an integer multiple of the diagram $\pi(\dd)$ given by 
\[ 
\pure \dd_{ij} = 
		\begin{cases}
				\prod_{k \not = i}\frac1{|d_i - d_k|}, & j = d_i \\
					 0 ,									& j \not = d_i
					 \end{cases}
	\]
(see \cite{HK}), and in \cite{ES1}, the authors show such a module exists for each degree sequence $\dd$.

For example, if $\dd = (0,2,4,6)$, then {}
\[
	\pure \dd = \begin{pmatrix}
		1/48 & - & - & - \\
		- & 1/16 & - & - \\
		- & - & 1/16 & - \\
		- & - & - & 1/48
	\end{pmatrix}.
\]

In proving the conjectures of M.\ Boij and J.\ S\"oderberg \cite{BS2},  D.\ Eisenbud and F.O.\ Schreyer show there is a unique decomposition of Betti tables in terms of $\pure \dd $ \cite{ES1}. 

\begin{thm}[\cite{BS1,ES1}]\label{thm:uniqueDecomp}
	Let $S = \kk [\lst x n]$ and $M$ an $S$-module of finite length.  Then there is a unique chain of degree sequences $\{\dd^1 < \cdots < \dd^s\}$ and a unique set of scalars $z_i \in \Q_{>0}$ such that 
	\[
		\beta(M) = \sum_{i = 1}^s z_i \pure{\dd^i}.
	\]
\end{thm}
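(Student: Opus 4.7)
The plan is to give a greedy algorithm for existence, with uniqueness following from the algorithm's structure. For any nonzero diagram $D$ in the cone of Betti diagrams, define its \emph{top degree sequence} $\dd^{\mathrm{top}}(D)$ by $d_i^{\mathrm{top}} := \min\{j : D_{i,j} \ne 0\}$. Since $M$ has finite length the Betti table $\beta(M)$ satisfies the Herzog-K\"uhl relations, and a short argument shows that these relations force $\dd^{\mathrm{top}}(\beta(M))$ to be strictly increasing, so it is a bona fide degree sequence.

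Next set
\[
z_1 := \min_i \frac{\beta_{i,d_i^{\mathrm{top}}}(M)}{\pure{\dd^{\mathrm{top}}}_{i,d_i^{\mathrm{top}}}} \quad \text{and} \quad D^{(1)} := \beta(M) - z_1\, \pure{\dd^{\mathrm{top}}}.
\]
By construction $D^{(1)}$ has non-negative entries, still satisfies the Herzog-K\"uhl relations (since $\pure{\dd^{\mathrm{top}}}$ does), and vanishes in at least one additional slot along its top edge. The crucial technical claim is that in fact $\dd^{\mathrm{top}}(D^{(1)}) > \dd^{\mathrm{top}}(\beta(M))$ componentwise; iterating the subtraction then yields a strictly increasing chain $\dd^1 < \dd^2 < \cdots$ with positive rational coefficients $z_1, z_2, \ldots$, and since $\beta(M)$ has bounded support only finitely many degree sequences can arise. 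Hence the process terminates with $\beta(M) = \sum_{i=1}^s z_i \pure{\dd^i}$ as required.

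For uniqueness, I argue by induction on the total number of summands. Given any two such decompositions $\beta(M) = \sum_i z_i \pure{\dd^i} = \sum_j z_j' \pure{(\dd')^j}$ along chains, the chain condition forces every summand other than the one indexed by the minimum degree sequence of the chain to vanish in at least one position along $\dd^{\mathrm{top}}(\beta(M))$. Thus both chains must begin with $\dd^{\mathrm{top}}(\beta(M))$, and matching entries along the top edge forces the initial coefficients to agree; subtracting that pure term and applying induction finishes the argument.

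The main obstacle is the strict-increase lemma: after one greedy subtraction, the top degree sequence strictly grows coordinatewise. This requires combining non-negativity of $D^{(1)}$ with the Herzog-K\"uhl relations in an essential way and is the technical heart of the proof; once it is established, existence, termination, and uniqueness all follow from the bookkeeping above.
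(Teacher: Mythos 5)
The paper cites this theorem from \cite{BS1} and \cite{ES1} without proof, so there is no internal proof to compare against. Your sketch correctly reproduces the greedy algorithm (Algorithm~\ref{StandardDecomp} in the paper) for \emph{computing} the decomposition, but the justification for why that algorithm works is exactly what is missing, and it is the substance of the theorem. Two concrete issues. First, you assert that "a short argument shows" the Herzog--Kühl relations force $\dd^{\mathrm{top}}$ to be strictly increasing; for $\beta(M)$ itself this actually follows from minimality of the free resolution, not from Herzog--Kühl, and after the first subtraction $D^{(1)}$ is no longer the Betti diagram of any module, so one needs an entirely new reason for its top degree sequence to be well-defined and for the process to continue. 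Second, and more seriously, your "crucial technical claim" is not a lemma that follows from non-negativity of $D^{(1)}$ plus the Herzog--Kühl equations. Those conditions alone do not characterize the Boij--Söderberg cone; the Eisenbud--Schreyer proof that each greedy step stays inside the cone passes through the construction of pure resolutions, the bilinear pairing between Betti diagrams and cohomology tables of vector bundles on $\mathbb{P}^{n-1}$, and the resulting description of the supporting hyperplanes of the cone. Without some substitute for that machinery, the existence half of your argument restates the theorem rather than proving it.

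A smaller point: the strict-increase claim as you state it, $\dd^{\mathrm{top}}(D^{(1)}) > \dd^{\mathrm{top}}(\beta(M))$ "componentwise," is too strong even when the algorithm works. The greedy subtraction only zeroes the entries at positions where the ratio $\beta_{i,d_i}/\pure{\dd^{\mathrm{top}}}_{i,d_i}$ attains its minimum, so only those coordinates increase while the rest of the top degree sequence is unchanged; the correct statement is $\dd^{\mathrm{top}}(D^{(1)}) \geq \dd^{\mathrm{top}}(\beta(M))$ with strict inequality in at least one coordinate, which is what the order $<$ on degree sequences means in this paper. Your uniqueness argument is essentially sound once existence is granted, but to isolate $z_1$ you should look at a position $(i,d^1_i)$ with $d^1_i < d^2_i$, where only $\pure{\dd^1}$ contributes, rather than an arbitrary top-edge entry, to which several pure diagrams in the chain can contribute.
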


The unique decomposition in Theorem \ref{thm:uniqueDecomp} respects the partial order (see \cite[Definition 2]{BS2}) of the degree sequences $\dd^i$ and is obtained by applying the greedy algorithm to a special chain of degree sequences.  We formalize this algorithm as follows.

\begin{algorithm}[Totally Ordered Decomposition Algorithm \cite{ES1}] \label{StandardDecomp}
Let  $S$ be $\kk [\lst x n]$ and $M$ be a finitely generated $S$-module of finite length.  Set $\beta = \beta(M)$.
	\begin{enumerate}
		\item[Step 1:] Identify the minimal degree sequence $\ul \dd$ of $\beta$;
		\item[Step 2:] Choose $q > 0 \in \Q$ maximal with respect to the condition that $\beta - q\pure{\ul \dd}$ has non-negative entries;
		\item[Step 3:] Set $\beta = \beta - q\pure{\ul \dd}$;
		\item[Step 4:] Repeat Steps 1-3 until $\beta$ is a pure diagram;
		\item[Step 5:] Write $\beta(M)$ as a sum of the the $q\pure{\ul \dd}$ obtained in the above steps.
	\end{enumerate}		
\end{algorithm}

\begin{example}\label{ex:EarlyCodim3DecompPart1} Consider $D = \beta(2,3,4)$.  The algorithm above produces the output
\[D = 42 \pure{0,2,5,9} + 12 \pure{0,3,5,9} + 36 \pure{0,3,6,9} + 12 \pure{0,4,6,9} + 42 \pure{0,4,7,9},\]
with degree sequences chosen by the algorithm in order from left to right.
\end{example}

We note that our choice of $\pure{\dd}$ differs from the choices used in \cite{BS1} and \cite{ES1}. In \cite{BS1}, they choose the pure diagram with $\beta_{0,0}=1$; in \cite{ES1}, they choose the smallest possible pure diagram with integral entries. Since the pure diagrams with degree sequence $\dd$ form a one-dimensional vector space, this different choice only affects the coefficients that arise in the algorithm. 

Let $D \in V$ be a diagram. Define the \defi{dual} of $D$, denoted $D^*$, via the formula 
	\[
		(D^*)_{ij} = D_{n-i,-j},
	\] 
and define \defi{twist} $D(r)$ via the formula 
	\[
		D(r)_{ij} = D_{i,r+j}.
	\]  
These definitions mimic the functors $\Hom_S(-,S) = -^*$ and $-\otimes S(r)$ for modules; one may check that $\beta(M^*) = \beta(M)^*$ and $\beta(M(r)) = (\beta(M))(r)$.  In particular, if $M$ is a Gorenstein module of finite length, the Betti diagram will be self-dual up to shift by $\Reg(M)$.

\begin{thm}[Symmetric Decomposition \cite{EKS},\cite{NS}]\label{symmetric} 
	Let $S=\kk [x_1,\ldots,x_n]$ and $M$ be a Gorenstein $S$-module of finite length.  Then the decomposition of $\beta(M)$ via Algorithm \ref{StandardDecomp} is symmetric; i.e., 
		\[
			\beta(M) = a_1 \pure{\dd^1} + a_2 \pure{\dd^2} + \cdots + a_2 \pure{\dd^2}^*(r) + a_1 \pure{\dd^1}^*(r)
		\] 
 	where $r = \Reg(M)$.
\end{thm}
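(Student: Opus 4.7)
The plan is to leverage the uniqueness clause of Theorem~\ref{thm:uniqueDecomp} against the self-duality of $\beta(M)$ for a Gorenstein module of finite length. First I would establish the diagram-level identity $\beta(M) = \beta(M)^*(r)$ by unwinding the definitions of $D^*$ and $D(r)$ and comparing to the numerical symmetry $\beta_{i,j}(M) = \beta_{n-i,\,n+\Reg(M)-j}(M)$, which follows from the fact that the minimal free resolution of a Gorenstein finite-length $S$-module is self-dual up to shift. Matching the two conventions pins down the value of $r$ appearing in the statement.

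Next I would record two structural facts about the operation $D \mapsto D^*(r)$: it is $\Q$-linear on $V$, and it sends pure diagrams to pure diagrams. A direct computation from the definitions gives $\pure{\dd}^*(r) = \pure{\tilde{\dd}}$, where $\tilde{\dd}$ is the degree sequence with $\tilde{d}_i = -r - d_{n-i}$. The assignment $\dd \mapsto \tilde{\dd}$ is an involution, and since $d_i \leq d_i'$ componentwise implies $\tilde{d}_i \geq \tilde{d}_i'$ componentwise, this involution reverses the componentwise partial order on strictly increasing degree sequences.

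With these preparations in place, apply $D \mapsto D^*(r)$ to both sides of the decomposition $\beta(M) = \sum_{i=1}^s a_i \pure{\dd^i}$ produced by Algorithm~\ref{StandardDecomp}. Self-duality turns the left-hand side back into $\beta(M)$, while linearity and the pure-to-pure formula turn the right-hand side into $\sum_{i=1}^s a_i \pure{\tilde{\dd}^i}$. The original chain $\dd^1 < \dd^2 < \cdots < \dd^s$ is sent to the reversed chain $\tilde{\dd}^s < \tilde{\dd}^{s-1} < \cdots < \tilde{\dd}^1$, so after reindexing we obtain a second valid Boij-S\"oderberg decomposition of $\beta(M)$. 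The uniqueness clause of Theorem~\ref{thm:uniqueDecomp} then forces $\tilde{\dd}^i = \dd^{s-i+1}$ and $a_i = a_{s-i+1}$, and collecting paired terms gives precisely the symmetric form in the statement.

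The main obstacle is the bookkeeping in the first step: carefully aligning the shifts between the module-theoretic Gorenstein duality and the combinatorial definitions of $D^*$ and $D(r)$, so that the ``$r = \Reg(M)$'' in the theorem is consistent with the shift needed to realize $\beta(M) = \beta(M)^*(r)$. Once that shift is correctly identified, the remainder of the argument is a short formal consequence of linearity and uniqueness.
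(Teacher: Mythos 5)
The paper does not actually supply a proof of this theorem; it is stated with citations to the references \cite{EKS} and \cite{NS}, so there is no ``paper's own proof'' to compare against. On its merits, your proposal is a correct and standard argument: establish the diagram-level self-duality coming from Gorenstein duality, observe that $D \mapsto D^*(r)$ is a linear order-reversing involution on pure diagrams, apply it to the Boij--S\"oderberg decomposition, and invoke the uniqueness clause of Theorem~\ref{thm:uniqueDecomp}. All the individual verifications you outline (the formula $\pure{\dd}^*(r) = \pure{\tilde\dd}$, the involution property, the order reversal) check out, and the identity $\beta_{i,j}(M) = \beta_{n-i,\,n + \reg(M) - j}(M)$ is the right form of the symmetry for a Gorenstein module of finite length.

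One bookkeeping caution, since you flag it yourself as the main obstacle: with the paper's \emph{literal} definitions $(D^*)_{ij} = D_{n-i,-j}$ and $D(r)_{ij} = D_{i,r+j}$, where $D_{ij}$ is read as $\beta_{i,j}$ in the degree (rather than display-row) indexing, the self-duality shift actually works out to $r = -(n + \reg(M))$, not $r = \reg(M)$; if instead $D_{ij}$ is read as the Macaulay2 display entry $\beta_{i,i+j}$, one gets $r = -\reg(M)$. So you should not expect ``$r = \reg(M)$'' to drop out automatically from the definitions as written; rather, you will need to either adopt a convention in which it does, or note the sign discrepancy. This does not affect the substance of your argument --- once the correct $r$ is fixed, the linearity, pure-to-pure, order-reversal, and uniqueness steps go through exactly as you describe --- but it is worth being explicit that the value of $r$ is determined by the chosen indexing conventions rather than being forced to equal $\reg(M)$ under the definitions on the page.
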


\subsection{Elimination Order}\label{intro:eliminorder}

The concept of an elimination order was first introduced in \cite{GJMRSW}. We give the basics here.

\begin{definition} 
Given a diagram $D \in V$, its \defi{elimination table} has as its $(i,j)^{\text{th}}$ entry the integer $k$ such that the $k^{\text{th}}$ iteration of Algorithm~\ref{StandardDecomp} applied to $D$ is the first iteration such that the $(i,j)^{\text{th}}$ entry of $D$ becomes zero.  The \defi{elimination order of $D$} is the sequence $\elimorder{D}$ with $k^{\text{th}}$ entry \[
\{(i_{k_1},j_{k_1}),\ldots,(i_{k_\ell},j_{k_\ell})\}
\]
where the $k^{\text{th}}$ step of Algorithm~\ref{StandardDecomp} eliminates $D_{i_{k_1},j_{k_1}},\ldots,D_{i_{k_\ell},j_{k_\ell}}$.
We denote by $\elimsize{D}$ the number of pure diagrams in the chain used by Algorithm~\ref{StandardDecomp}; $\elimsize{D}$ is therefore the maximal integer appearing in the elimination table of $D$.
\end{definition}

The elimination table is a means of recording the elimination order of the row and column position according to Algorithm~\ref{StandardDecomp}.  Given any diagram $D \in V$, the sequence of pure diagrams appearing in the Boij-S\"oderberg decomposition of $D$ can be obtained recursively from the elimination table. Indeed, in Algorithm~\ref{StandardDecomp}, the degree sequence of the pure diagram corresponding to the $t^{\text{th}}$ iteration is given by the sequence of least degrees in each column after $t-1$ eliminations. We may read this in the elimination table as entries of least degree in each column (i.e., highest up on the page) among those with value at least $t$.

\begin{example}\label{ex:EarlyCodim3DecompPart2} Let $D = \beta(2,3,4)$. Algorithm~\ref{StandardDecomp} gives the elimination table 
\[
\begin{pmatrix} 5 & \text{.} & \text{.} & \text{.} \\
				\text{.} & 1 & \text{.} & \text{.} \\
				\text{.} & 3 & \text{.} & \text{.} \\
				\text{.} & 5 & 2 & \text{.} \\
				\text{.} & \text{.} & 4 & \text{.} \\
				\text{.} & \text{.} & 5 & \text{.} \\
				\text{.} & \text{.} & \text{.} & 5 
\end{pmatrix}
\]
and elimination order 
\[
	\elimorder{D} = \big(\{(1,2)\},\{(3,5)\},\{(2,3)\},\{(4,6)\}, \{(0,0),(3,4),(5,7),(6,9)\}\big).
\]
\end{example}

Observe that the only entry of $\elimorder{D}$ in Example~\ref{ex:EarlyCodim3DecompPart2} that isn't a singleton set is the fifth and final entry; this corresponds to the last step of Algorithm~\ref{StandardDecomp}, where the bottom line of $D$ is removed with the final pure diagram.  This behavior is quite nice, but not guaranteed.  Indeed, one may check that $D = \beta(2,3,5,7)$ has elimination order
\begin{align*}
\elimorder{D} = \big( &
\{(1,2)\},
\{(2,5)\},
\{(2,7)\},
\{(1,3),(3,10)\},
\{(2,8)\},
\{(1,5),(3,12)\},\\
& \quad
\{(2,9)\},
\{(2,10)\},
\{(3,14)\},
\{(0,0),(1,7),(2,12),(3,15),(4,17)\}
\big),
\end{align*}
so that $\#(\elimorder{D}_4) = \#(\elimorder{D}_6) = 2$, but $4 < 6 < \elimsize{D} = 10$.  We give this behavior a name below.

\begin{definition} Given a diagram $D \in V$, we say that \defi{mass elimination} occurs if $\#\left(\elimorder{D}_k\right) > 1$ for some $k \lneq \elimsize{D}$. 
\end{definition}

Note that mass elimination does not occur for complete intersections of codimension at most three. At each stage of Algorithm~\ref{StandardDecomp}, the choice of $q$ will only eliminate one entry. This is not the case for codimension four and above.

\section{Decomposing Complete Intersections}\label{sec:newalg}

For the rest of the paper, we assume $a_1 \leq a_2 \leq \cdots \leq a_c \leq a_{c+1}$ and we set $a = a_1 + \cdots + a_c$.

In this section, we create a decomposition algorithm for $\beta(a_1,\ldots, a_c,a_{c+1})$ that uses the decomposition of $\beta(a_1,\ldots,a_c)$ from Algorithm~\ref{StandardDecomp} and $a_{c+1}$ as initial inputs. We then describe when the new algorithm produces the traditional Boij-S\"oderberg decomposition of $\beta(a_1,\ldots,a_{c+1})$ that arises via Algorithm~\ref{StandardDecomp}.

\begin{remark}[Notation]\label{rmk:notation} 
Given a degree sequence $\dd = (d_0, d_1, \ldots, d_c)$, we define $\concat(\dd,N) = (d_0, \ldots, d_c, N)$; when $N > d_c$, this is a degree sequence in codimension $c+1$.  When it is understood that we are working with a complete intersection of generating degrees $a_1 \leq a_2 \leq \cdots \leq a_c \leq a_{c+1}$, we use the notation $\beta^{\ell} = \beta(a_1,\ldots,a_{\ell})$, where $\ell \leq c+1$, to simplify notation in prose and inductive arguments. In order to designate specific entries of the input diagram $\beta^c$ and its decomposition via Algorithm~\ref{StandardDecomp}, we set up the following notational conventions.  Suppose $\beta^c$ has no instances of mass elimination.   Let $\elims = \elimsize{\beta^c}$.  Then $\elimorder{\beta^c}_s = \{(i_s,j_s)\}$ for $1 \leq s \leq \elims -1$. We set $b_s = \left({\beta^c}\right)_{i_s,j_s}$ so that it is the entry of $\beta^c$ that is eliminated in step $s$.  For $s = \elims$, we define $b_\elims = {\beta^c}_{c,a}$ so that $i_\elims = c$ and $j_\elims = a$.  Finally, for $1 \leq s \leq \elims$, we define $p_s = \pure{\dd^s}_{i_s,j_s}$, the relevant entry of the pure diagram used to eliminate $b_s$.  It is useful to notice that ${\dd^s}_{i_s} = j_s$, and when $a_{c+1} \geq a$, $\left({\beta^{c+1}}\right)_{i_s,j_s} = b_s$.
 \end{remark}

\subsection{New Algorithm}

Our goal is to create an algorithm that decomposes $\beta^{c+1}$ that respects the elimination order of $\beta^c$.  For ease of discussion, we divide it into three phases. \ref{p1} consists of three steps, where the algorithm follows the previous elimination order to eliminate the top left entries.  That is, the entries $b_k$ from $\beta(\lst a c)$ appear as entries in the new diagram $\beta(\lst a {c+1})$, and we eliminate them in the same order (taking care to choose a specific entry to eliminate at step $\elims$). \ref{p2} consists of the next $c-1$ steps, where the degree sequences are chosen to eliminate specific entries of the diagram from right to left.  \ref{p3} consists of the remainder of the algorithm and uses degree sequences symmetric to those in \ref{p1} to eliminate as many of the remaining elements as possible. A priori, it is possible that this algorithm terminates with an error diagram $E$, by which we mean that \ref{p3} is not assumed to finish eliminating the bottom of the diagram. Thus, we say that Algorithm~\ref{NewAlgorithm} \defi{decomposes $\beta(\lst a {c+1})$} provided $E$ is the zero diagram. 

\begin{algorithm}[New Algorithm]\label{NewAlgorithm}
	Let $c>1$. Consider a complete intersection 
	\[
		R = \kk[\lst x c, x_{c+1}]/(f_1,\ldots,f_c,f_{c+1}),
	\]
	where $\deg(f_i) = a_i$ and $a_1 \ls a_2 \ls \cdots \ls a_c \ls a_{c+1}$ with $\A = a_1 + a_2 + \cdots + a_c$. A decomposition of $\bet^{c+1} = \betti{\lst a c, a_{c+1}}$ 
	is given as follows.
	\begin{description}
		\item [Phase~1\label{p1}] Elimination of $\beta^{c+1}$ with respect to $\elimorder{\beta^c}$. 
		\begin{enumerate}[label=\textit{Step 1.\arabic*:},ref=1.\arabic*]
			\item \label{p1.1}
				Consider the decomposition of $\beta^c$ according to Algorithm~\ref{StandardDecomp} and let $\elims = \elimsize{\betti{\lst a c}}$ In particular, let 
				\[
					\bet^c = \sum_{s = 1}^\elims z_s \pure{\dd^s}.
				\]

			\item \label{p1.2}
				Make new degree sequences $\ee^s = \concat(\dd^s,a+a_{c+1})$ for $1 \ls s \ls \elims$. 

			\item \label{p1.3}
				Use each new $\ee^s$ degree sequence to eliminate the entries of $\bet^{c+1}$ according to the elimination order $\elimorder{\bet^c}$. That is, in step $s$, eliminate ${\beta^{c+1}}_{i_s,j_s} = b_s$.
				Set the respective coefficients in $\Q$ equal to $y_\sigma$. In particular, let
				\[
					P_1 = \sum_{s = 1}^{\elims} y_s \pure{\ee^s}. 
				\]
				Then we have that $\bet^{c+1} = P_1 + E$ for some error diagram $E$.
		\end{enumerate}
		\item [Phase~2\label{p2}] Eliminate from right to left along the columns.
		\begin{enumerate}[label=\textit{Step 2.\arabic*:},ref=2.\arabic*]
			\item\label{p2.1} If $a_c = a_{c+1}$, proceed to \ref{p3}. Otherwise define the degree sequences as
				\[ \ee_i^{\elims + k} =
					\begin{cases}
						 \ee_i^{\elims + k -1}, 		\quad &\text{for } i \not= c-k+1 \\
						 \sum_{l=1}^{i-1} a_l+a_{c+1}, 	\quad &\text{for } i = c-k+1
					\end{cases}
				\]
				for $1 \ls k \ls c-1$. 

			\item\label{p2.2} Use the degree sequences in Step \ref{p2.1} and choose each $y_s$ to eliminate the top entry in the $c-k^{\text{th}}$ column, starting with $k = 1$. In particular, let 
			\[
				P_2 = \sum_{s = \elims+1}^{\elims+c-1} y_s \pure{\ee^s}, 
			\]
			where $\ee^\elims$ is defined in Step~\ref{p1.2}. Then we have $\bet^{c+1} = P_1 + P_2 + E$ for some error diagram $E$.
		\end{enumerate}
		\item [Phase~3\label{p3}] Eliminate using the dual of the degree sequences in \ref{p1}.
		\begin{enumerate}[label=\textit{Step 3.\arabic*:},ref=3.\arabic*]
			\item\label{p3.1} Complete the elimination according to Theorem~\ref{symmetric}. Letting
				\[
					P_3 = \sum_{s = \elims+c}^{2\elims + c -1} y_{s} \pure{\ee^{s}},
				\]
				where $\pure{\ee^{s}} = \pure{\ee^{2\elims+c-1-s}}^*(a+a_{c+1}-c-1)$ from \ref{p1}. Here, each $y_s$ is chosen to eliminate the position given by the elimination order of $\bet^c$.

			\item\label{p3.2} Combining all three Phases, we have 
			\[
				\bet^{c+1} = P_1 + P_2 +P_3 + E
			\]
			for some error diagram $E$. 
		\end{enumerate}
	\end{description}
\end{algorithm}

For a general understanding of Algorithm~\ref{NewAlgorithm}, consider the following example.   The full details are given in Example~\ref{ex:stabilze}.

\begin{example}\label{ex:EarlyCodim3DecompPart3} 
Let $D = \bet(2,3,4,a_4)$ where $a_4 \geq 4$. From Example~\ref{ex:EarlyCodim3DecompPart2} we know the elimination order of $\bet(2,3,4)$. With this information, we can run Algorithm~\ref{NewAlgorithm} on the diagram $D$; the table in Figure~\ref{fig:NewAlgPic} is the elimination table of this process. 

\begin{figure}[H]
\begin{tiny}\setlength{\arraycolsep}{1pt}
	\[
	\begin{pmatrix}
	     \ensquare{12}&\text{.}&\text{.}&\text{.}&\text{.}\\
	     \text{.}&\encircle 1&\text{.}&\text{.}&\text{.}\\
	     \text{.}&\encircle 3&\text{.}&\text{.}&\text{.}\\
	     \text{.}&\enwithdiamond 7&\encircle 2&\text{.}&\text{.}\\
	     \text{.}&\text{.}&\encircle 4&\text{.}&\text{.}\\
	     \text{.}&\text{.}&\enwithdiamond 6&\text{.}&\text{.}\\
	     \text{.}&\text{.}&\text{.}&\encircle 5&\text{.}\\
	     \text{.}&\text{.}&\text{.}&\text{.}&\text{.}\\
	     		&\vdots&\vdots&\vdots&\\
	     \text{.}&\text{.}&\text{.}&\text{.}&\text{.}\\
	     \text{.}&\ensquare{12}&\text{.}&\text{.}&\text{.}\\
	     \text{.}&\text{.}&\ensquare{8}&\text{.}&\text{.}\\
	     \text{.}&\text{.}&\ensquare{10}&\text{.}&\text{.}\\
	     \text{.}&\text{.}&\ensquare{12}&\ensquare{9}&\text{.}\\
	     \text{.}&\text{.}&\text{.}&\ensquare{11}&\text{.}\\
	     \text{.}&\text{.}&\text{.}&\ensquare{12}&\text{.}\\
	     \text{.}&\text{.}&\text{.}&\text{.}&\ensquare{12}\end{pmatrix}
	\]
  \end{tiny}
	\caption[A pictorial representation of Algorithm~\ref{NewAlgorithm}.]{A pictorial representation of Algorithm~\ref{NewAlgorithm}. \ref{p1} entries are marked with {\tiny$\encircle{\phantom 3}$}. Similarly \ref{p2} and \ref{p3} are marked with {\tiny$\enwithdiamond{\phantom 3}$} and {\tiny$\ensquare{\phantom{10}}$} respectively.}
	\label{fig:NewAlgPic}
\end{figure}

Given the diagram $D$, in \ref{p1} we calculate the decomposition and the elimination order of $\bet(2,3,4)$. 
Using this information, we form new degree sequences and choose coefficients to target the entries that line up with
the elimination order of $\bet(2,3,4)$. 
In \ref{p2}, we target the rest of the ``old'' diagram $\bet(2,3,4)$ working from right to left. 
In \ref{p3}, we form degree sequences by calculating the dual of the pure diagrams from \ref{p1} and choose positions to 
eliminate by reversing the elimination sequence from \ref{p1}. 
In fact, Algorithm~\ref{NewAlgorithm} decomposes $\beta(2,3,4,a_4)$ as long as $a_4 \geq 4$.  When $a_4 > 12$, the degree sequences form a chain and the coefficients are all positive, so the algorithm produces the traditional Boij-S\"oderberg decomposition of the diagram.  Thus $\elimorder{\beta(2,3,4,13)}$ is compatible with $\elimorder{\beta(2,3,4)}$.  However, when $4 \leq a_4 \leq 12$, the elimination order is different and some of the coefficients produced by Algorithm~\ref{NewAlgorithm} will be negative or, in the case of $a_4 = 12$, zero.
\end{example}

Despite the fact that Algorithm~\ref{NewAlgorithm} can produce negative coefficients, it will always decompose the diagram. 

 \begin{thm}\label{thm:Ezero} 
 	Algorithm~\ref{NewAlgorithm} decomposes $\bet(\lst a {c+1})$. In particular, $E$ is always the zero diagram. 
 \end{thm}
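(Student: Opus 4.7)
The plan is to show that each phase of Algorithm~\ref{NewAlgorithm} contributes the right pure pieces so that $\bet^{c+1} - (P_1 + P_2 + P_3) = 0$. Two structural facts drive the argument: the Koszul identity
\[
\bet^{c+1}_{i,j} \;=\; \bet^c_{i,j} \;+\; \bet^c_{i-1,\,j-a_{c+1}},
\]
which splits $\bet^{c+1}$ into a ``top'' copy of $\bet^c$ and a ``bottom'' copy of $\bet^c$ shifted by $(1, a_{c+1})$, together with the Gorenstein self-duality $\bet^{c+1}_{i,j} = \bet^{c+1}_{c+1-i,\,a+a_{c+1}-j}$ coming from the fact that complete intersections are Gorenstein.

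For Phase~1, I would first compute explicitly that $\pure{\ee^s}_{i,d_i^s} = \pure{\dd^s}_{i,d_i^s}/(a+a_{c+1}-d_i^s)$ for $0 \le i \le c$, so each Phase~1 pure diagram is, up to column-wise rescaling, a copy of $\pure{\dd^s}$ with a single additional entry at position $(c+1, a+a_{c+1})$. The triangularity of $\elimorder{\bet^c}$ then forces each $y_s$ uniquely, and by induction on $s$ I would track, position by position, which entries of $\bet^{c+1}$ the partial sum $\sum_{s' \le s} y_{s'}\pure{\ee^{s'}}$ has absorbed. At the end of Phase~1, the residual $\bet^{c+1} - P_1$ should turn out to be supported on (a) the shifted copy $\bet^c_{i-1,\,j-a_{c+1}}$ in the bottom half, and (b) a column-by-column discrepancy in rows $2,\ldots,c$ caused by the fact that the scaling factors $1/(a+a_{c+1}-d_i^s)$ depend on $i$.

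For Phase~2, the $c-1$ degree sequences built in Step~\ref{p2.1} successively replace the coordinate in row $c-k+1$ of $\ee^{\elims+k-1}$ by $\sum_{l < c-k+1} a_l + a_{c+1}$, which is exactly the column label where the shifted copy $\bet^c_{i-1,\,j-a_{c+1}}$ has its top nonzero entry in that row. Choosing each $y_{\elims+k}$ to clear that entry, I would verify column by column, from right to left, that Phase~2 absorbs the discrepancy identified in Phase~1, leaving only the Gorenstein-dual image of what Phase~1 removed. Phase~3 is then handled by symmetry: because the pure diagrams in Step~\ref{p3.1} are by construction the duals $\pure{\ee^{2\elims+c-1-s}}^*(a+a_{c+1}-c-1)$ of those in Phase~1, and because Theorem~\ref{symmetric} together with the Gorenstein property forces $\bet^{c+1} - P_1 - P_2$ to be the image under the dualizing involution of the portion Phase~1 eliminated, the same inductive bookkeeping applied in dualized form shows that $P_3$ cancels this residual exactly.

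I expect the main obstacle to be the Phase~2 analysis: proving that the specific degree sequences constructed in Step~\ref{p2.1} (rather than naive continuations of the Phase~1 sequences) are the correct ones to strip off the residual requires tracking both the column-wise scaling factors $(a+a_{c+1}-d_i^s)$ from Phase~1 and the interaction of the shifted copy $\bet^c_{i-1,\, j-a_{c+1}}$ with the overshoot introduced in the last row and column. The edge case $a_c = a_{c+1}$, in which Phase~2 is skipped entirely per Step~\ref{p2.1}, will require a separate direct comparison of Phase~1 with Phase~3.
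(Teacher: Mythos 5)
Your plan takes a genuinely different route from the paper, and as written it has a gap that you yourself flag: you have not verified the structural claim that Phase~1 leaves a residual supported exactly on the shifted copy plus a discrepancy in rows $2,\ldots,c$, nor that the Phase~2 sequences clear that discrepancy. Those claims are the heart of the argument, and neither is reduced to a concrete computation. The residual after Phase~1 is also subtler than you describe: each $\pure{\ee^s}$ is rescaled in \emph{every} column $i\le c$ by a different factor $1/(a+a_{c+1}-d^s_i)$, so residual mass survives at every top-copy position not named in $\elimorder{\bet^c}$; moreover the last step $s=\elims$ of that order is declared (Remark~\ref{rmk:notation}) to cover only the single entry $(c,a)$ even though Algorithm~\ref{StandardDecomp} clears several entries of $\bet^c$ at that step. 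Until the residual is pinned down precisely, the inductive bookkeeping---and its dualized version for Phase~3---cannot be carried out, so this is a genuine gap rather than a detail to fill in.

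The paper proves the theorem with a short global argument that bypasses all position-by-position tracking, and I would steer you toward it. Every intermediate diagram in Algorithm~\ref{NewAlgorithm} is a $\QQ$-linear combination of $\bet^{c+1}$ and pure diagrams, hence satisfies the Herzog--K\"uhl equations $\sum_{i,j}(-1)^i j^k\beta_{i,j}=0$ for $k=0,\ldots,c$. By construction, after the penultimate step the residual $D$ has at most one nonzero entry per column, located at the positions of $\ee^{2\elims+c-1}$, so these $c+1$ equations become a homogeneous system $Vd=0$ for a $(c+1)\times(c+2)$ Vandermonde-type matrix $V$ with nodes the entries of $\ee^{2\elims+c-1}$. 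The last subtraction zeroes one coordinate $d_i$; deleting the $i$-th column of $V$ leaves an invertible square Vandermonde matrix, and the resulting system forces the remaining coordinates to vanish, so $E=0$. The only structural input this needs is that the penultimate residual is a pure shape, which follows from the design of the three phases far more cheaply than the accounting your plan requires.
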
 
\begin{proof}
	At each stage of Algorithm~\ref{NewAlgorithm} a scalar multiple of a pure diagram is subtracted from the previous diagram, eliminating at least one entry.  After the penultimate step, the non-zero entries of the resulting diagram $D$ corresponds to the degree sequence $\ee^{2\elims+c-1}$. It is enough to show that $s\cdot \pure{\ee^{2\elims+c-1}} = D$ for some scalar $s$. To do this, notice each stage of Algorithm~\ref{NewAlgorithm} produces a diagram satisfying the Herzog-Kuhl equations \cite{BS1} where our codimension is $c+1$,
	\begin{align*}
		\sum_{i,j}(-1)^i j^0 \beta_{i,j} &= 0\\
		\sum_{i,j}(-1)^i j^1 \beta_{i,j} &= 0\\
		&\,\,\,\,\,\vdots \\
		\sum_{i,j}(-1)^i j^c \beta_{i,j} &= 0.
	\end{align*}
	Since each column of $D$ has at most one non-zero entry we can write the above system as the following matrix equation,
	\begin{align*}
		\begin{bmatrix}
			1 & 1 & \cdots & 1 \\
			e_0 & e_1 & \cdots & e_{c+1} \\
			e_0^2 & e_1^2 & \cdots & e_{c+1}^2 \\
			\vdots & \vdots &  & \vdots \\
			e_0^c & e_1^c & \cdots & e_{c+1}^c \\
		\end{bmatrix}%
		\begin{bmatrix}
			d_0 \\
			-d_1 \\
			d_2 \\
			\vdots \\
			\pm d_{c+1}
		\end{bmatrix}%
		=
		\begin{bmatrix}
			0 \\
			0 \\
			0 \\
			\vdots \\
			0			
		\end{bmatrix},				
	\end{align*}
	where $\ee^{2\elims+c-1} = (e_0, \lst e {c+1})$ and $d_i$ are the entries of $D$ in column $i$, degree $e_i$. The last step of the algorithm chooses an $s$ such that $D' = D-s\pure{\ee^{2\elims+c-1}}$ has an entry greedily eliminated from $D$, say $d_i$. As the shape of $D'$ still corresponds to $\ee^{2\elims+c-1}$, the entries $d'_0, \lst{d'}{c+1}$ satisfy the matrix problem above. Since $d'_i = 0$, we can ignore the $i$th column of the matrix, obtaining an invertible Vandermonde matrix. This new homogeneous system has only the trivial solution, forcing $D'=0$.
\end{proof}

The following results are useful in determining when the decomposition lines up with the \BS decomposition, as described in Corollary~\ref{cor:algisBS}. 

\begin{dfn}[\cite{EKS}]\label{dfn:check} For a degree sequence $\ee = (\ee_0,\ee_1,\ldots,\ee_{c+1})$ with $\ee_0 = 0$, set
\[ \ee^\vee = (\ee_{c+1} - \ee_{c+1}, \ee_{c+1} - \ee_{c}, \ldots, \underbrace{\ee_{c+1} - \ee_{c+1 - k}}_{\text{position $c+1+k$}}, \ldots, \ee_{c+1} - \ee_0
).
\]
\end{dfn}

From \cite[Proposition 2.4]{EKS}, it follows that 
\begin{equation}\label{eq:check-star}
\pure{\ee}^*(a + a_{c+1} - c) = \pure{\ee^\vee}.
\end{equation}	

\begin{lem}\label{lem:symmetry} The degree sequences and coefficients from Algorithm~\ref{NewAlgorithm} are symmetric.  That is, $\ee^s = (\ee^{N-s + 1})^\vee$ and $y_s = y_{N-s + 1}$ where $N = 2\elims + c - 1$.
\end{lem}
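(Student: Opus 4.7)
The plan is to verify the symmetry in two stages: the degree sequences first, then the coefficients. The index pairing $s \mapsto N - s + 1 = 2\elims + c - s$ sends Phase~1 indices $\{1,\ldots,\elims\}$ to Phase~3 indices and sends Phase~2 indices $\{\elims+1,\ldots,\elims+c-1\}$ to themselves, so the argument naturally splits along these cases.

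For the Phase~1 / Phase~3 pairing of degree sequences, the identity $\ee^s = (\ee^{N-s+1})^\vee$ is immediate from Step~\ref{p3.1} together with~\eqref{eq:check-star}: Step~\ref{p3.1} defines each Phase~3 pure diagram as a shifted dual of the paired Phase~1 pure diagram, and \eqref{eq:check-star} translates this into the $\vee$-identity on degree sequences, since the map $\dd \mapsto \pure{\dd}$ is injective on degree sequences of fixed length. For the Phase~2 self-pairing, I would unfold the recursion in Step~\ref{p2.1} to obtain the closed form
\[
	\ee^{\elims+k}_i = \begin{cases}
		\dd^\elims_i, & 1 \leq i \leq c-k, \\
		\sum_{l=1}^{i-1} a_l + a_{c+1}, & c-k+1 \leq i \leq c,
	\end{cases}
\]
together with $\ee^{\elims+k}_0 = 0$ and $\ee^{\elims+k}_{c+1} = a + a_{c+1}$. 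Comparison with $(\ee^{\elims+c-k})^\vee$ then hinges on the explicit form $\dd^\elims_i = \sum_{l=c-i+1}^c a_l$, which follows by applying Theorem~\ref{symmetric} to the Gorenstein diagram $\bet^c$ (so that $\dd^\elims = (\dd^1)^\vee$) together with the observation $\dd^1 = (0, a_1, a_1+a_2, \ldots, a)$. A direct check that $\ee^{\elims+k}_i + \ee^{\elims+c-k}_{c+1-i} = a + a_{c+1}$ for each $i$ then closes the calculation, handling the two subranges of $i$ separately.

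For the coefficient symmetry, I would leverage the Gorenstein property $(\bet^{c+1})^*(r) = \bet^{c+1}$ with $r = a + a_{c+1} - (c+1)$. Proceeding by strong induction on $s$, the hypothesis $y_t = y_{N-t+1}$ for $t<s$ together with the degree-sequence symmetry from Part~1 implies that the residual diagram $D_{s-1} = \bet^{c+1} - \sum_{t<s} y_t \pure{\ee^t}$ is itself self-dual under shift by $r$, because the subtracted pure diagrams come in $\vee$-paired pairs with equal coefficients. The greedy coefficient $y_s$ is the ratio of the entry of $D_{s-1}$ at the target position to the corresponding entry of $\pure{\ee^s}$; by the self-duality of $D_{s-1}$ and the identity $\pure{\ee^s}^*(r) = \pure{\ee^{N-s+1}}$, this ratio coincides with the one determining $y_{N-s+1}$, yielding $y_s = y_{N-s+1}$.

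The main obstacle will be the bookkeeping required to identify the target positions under the duality pairing. In Phase~1 the targets are prescribed by $\elimorder{\bet^c}$; in Phase~2 they are the top entries of columns $c, c-1, \ldots, 2$ in order; and in Phase~3 they are the Phase~1 targets in reversed order by way of Theorem~\ref{symmetric}. For each pair $(s, N-s+1)$, one must verify that the target of step $s$ is the duality partner of the target of step $N-s+1$ under the shift by $r$. This splits into Phase~1/Phase~3, Phase~2/Phase~2, and the phase-boundary cases, each requiring the explicit descriptions of the degree sequences obtained in Part~1 but no new conceptual input.
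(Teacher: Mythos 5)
Your treatment of the degree-sequence symmetry tracks the paper's: the Phase~1/Phase~3 pairing is essentially definitional from Step~\ref{p3.1} together with \eqref{eq:check-star}, and the Phase~2 self-pairing is an explicit comparison using a closed form for $\ee^{\elims+k}$ that agrees with the one the paper writes out. That part is sound, and the use of Theorem~\ref{symmetric} to identify $\dd^\elims$ as $(\dd^1)^\vee$ is a clean way to pin down the first subrange of entries.

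The coefficient argument, however, has a genuine gap. You assert that $D_{s-1} = \bet^{c+1} - \sum_{t<s} y_t \pure{\ee^t}$ is self-dual under the shift ``because the subtracted pure diagrams come in $\vee$-paired pairs with equal coefficients,'' but the index set $\{1,\ldots,s-1\}$ is not closed under $t \mapsto N-t+1$: it maps to $\{N-s+2,\ldots,N\}$, which (for $s-1 \leq N/2$) is disjoint from it. Nothing pairs up, and $D_{s-1}$ is not self-dual. What \emph{is} self-dual is $\bet^{c+1} - \sum_{t<s} y_t\bigl(\pure{\ee^t} + \pure{\ee^{N-t+1}}\bigr)$, but extracting $y_s$ from that quantity would additionally require showing that the Phase~3 diagrams $\pure{\ee^{N-t+1}}$ for $t<s$ vanish at the step-$s$ target position $(i_s,j_s)$ --- a fact that needs its own verification and is sensitive to the size of $a_{c+1}$, which the lemma does not restrict. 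The paper sidesteps the induction entirely with a global argument: because $\bet^{c+1}$ equals its own shifted dual, applying $(-)^*(a+a_{c+1}-c)$ to the identity $\bet^{c+1} = \sum_s y_s\pure{\ee^s}$ and invoking \eqref{eq:check-star} yields $\bet^{c+1} = \sum_s y_s\pure{\ee^{N-s+1}}$, and linear independence of the $\pure{\ee^s}$ then gives $y_s = y_{N-s+1}$ in one step, with no positional bookkeeping. You should replace your induction with this argument.
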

\begin{proof}
	Assume that $\elims + k \ls N - \elims -k$, this forces $k+1 < c-k+1$. Starting the index of the degree sequence at 0, we have 
	\begin{align*}
		\ee^{\elims+k} = \Bigg(0, a_c, a_c + a_{c-1}, &\ldots,
		\underbrace{\sum_{\ell = k+1}^{c} a_\ell}_{\text{index } c-k},\\ 
		& \underbrace{a_{c+1} + a - \sum_{\ell = c-k+1}^c a_\ell }_{\text{index } c-k+1}, 
		\ldots, a_{c+1} + a - \sum_{\ell = c}^c a_\ell , a_{c+1} + a\Bigg).
	\end{align*}
	After applying the dual we should have the following degree sequence, 
	\begin{align*}
		\ee^{\elims+(c-k)} = \Bigg(0, a_c, a_c + a_{c-1}, &\ldots, 
		\underbrace{\sum_{\ell = c-k+1}^{c} a_\ell}_{\text{index } k},\\
		&\underbrace{a_{c+1} + a - \sum_{\ell = k+1}^c a_\ell }_{\text{index } k+1}, 
		\ldots, a_{c+1} + a - \sum_{\ell = c}^c a_\ell , a_{c+1} + a \Bigg).
	\end{align*}
	Calculating the dual, we find that $(\ee^{\elims+k})^\vee = \ee^{\elims+(c-k)}$. 

	Next, we show $y_s = y_{N-s+1}$.

	Since $\beta^{c+1} = \left(\beta^{c+1}\right)^*(a + a_{c+1} - c)$, we have that $\left(\beta^{c+1}\right)^*(a + a_{c+1} - c)= \sum y_s \pure{\ee^s}$.  However, $\left(\beta^{c+1}\right)^*(a + a_{c+1} - c) = \sum y_s \pure{\ee^s}^*(a + a_{c+1} - c)$ as well.  Since $\pure{\ee^s}^*(a + a_{c+1} - c) = \pure{\ee^{N - s+1}}$ and the set $\{\pure{\ee^s}\}$ is a basis for $V$, it follows that $y_s = y_{N-s+1}$.
\end{proof}

\begin{prop}\label{lem:chain} If ${a_{c+1}} \geq a$, then the set of degree sequences $\{\ee^s\}$ is totally ordered. 
\end{prop}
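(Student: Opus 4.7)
The plan is to verify that $\ee^s \le \ee^{s+1}$ componentwise for each consecutive pair of degree sequences produced by Algorithm~\ref{NewAlgorithm}, since pairwise comparability of consecutive entries (together with transitivity) gives the total order on $\{\ee^s\}_{s=1}^{2\elims+c-1}$. I would split the argument into three within-phase regimes, together with two transitions between phases.

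Within \ref{p1}, $\ee^s=\concat(\dd^s,a+a_{c+1})$ inherits its chain structure directly from $\dd^1<\dd^2<\cdots<\dd^\elims$, which is the defining output of Algorithm~\ref{StandardDecomp} applied to $\bet^c$. Within \ref{p3}, Lemma~\ref{lem:symmetry} identifies $\ee^{\elims+c+t}=(\ee^{\elims-t})^\vee$ for $0\le t\le \elims-1$; since all Phase-1 sequences share the common top entry $a+a_{c+1}$, the dual $\ee\mapsto\ee^\vee$ of Definition~\ref{dfn:check} reverses the componentwise order, so the Phase-1 chain translates into an increasing chain in \ref{p3}. The heart of the argument is \ref{p2}: for $1\le k\le c-2$, the sequences $\ee^{\elims+k}$ and $\ee^{\elims+k+1}$ agree except in position $c-k$, which has not yet been touched and therefore equals $\dd^\elims_{c-k}=\sum_{l=k+1}^{c}a_l$ before being replaced by $\sum_{l=1}^{c-k-1}a_l+a_{c+1}$. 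The required inequality
\[
\sum_{l=k+1}^{c}a_l \;\le\; \sum_{l=1}^{c-k-1}a_l+a_{c+1}
\]
follows from $a_{c+1}\ge a$, since the left-hand side is bounded above by $a$.

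For the Phase-1-to-Phase-2 transition, $\ee^{\elims+1}$ differs from $\ee^\elims$ only in position $c$, which changes from $\dd^\elims_c=a$ to $a-a_c+a_{c+1}$; this is monotone by $a_c\le a_{c+1}$. For the Phase-2-to-Phase-3 transition, a direct computation from Definition~\ref{dfn:check} shows that $(\ee^\elims)^\vee$ and $\ee^{\elims+c-1}$ agree in every coordinate except position $1$, where the Phase-2 value $a_c$ (carried unchanged from $\dd^\elims$, since \ref{p2} never modifies position $1$) is replaced by the Phase-3 value $a_{c+1}$; again $a_c\le a_{c+1}$ suffices. The main obstacle is the bookkeeping in \ref{p2}: one must carefully track which positions have and have not been modified at each step and then recognize that each resulting inequality reduces to the hypothesis $a_{c+1}\ge a$, while the inter-phase transitions use only the weaker (automatic) inequality $a_c\le a_{c+1}$.
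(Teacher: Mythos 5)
Your proof is correct and follows essentially the same route as the paper's: inherit the chain structure in \ref{p1} from the $\dd^s$, use $a_{c+1}\geq a$ for the within-\ref{p2} steps and the \ref{p1}-to-\ref{p2} transition, and transfer the ordering to \ref{p3} via Lemma~\ref{lem:symmetry}. The only structural difference is that you verify the \ref{p2}-to-\ref{p3} transition directly (noting it hinges only on $a_c\leq a_{c+1}$), whereas the paper gets that inequality for free as the dual of the \ref{p1}-to-\ref{p2} transition; both are valid, and your explicit tracking of which coordinates have been modified at each \ref{p2} step (the untouched coordinate equals $\dd^\elims_{c-k}=\sum_{l=k+1}^{c}a_l$) is actually cleaner than the indexing appearing in the paper's printed proof.
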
 

\begin{proof} 
	The degree sequences $\ee^1,\ldots,\ee^\elims$ in \ref{p1} form a chain because the degree sequences $\dd^1,\ldots,\dd^\elims$ were obtained from Algorithm~\ref{StandardDecomp} and thus form a chain themselves.

	The degree sequences $\ee^{\elims + 1}$ and $\ee^{\elims}$ may differ only in position $c$.  There, we have $\ee^{\elims + 1}_c = \sum_{i = 1}^{c-1} a_i + a_{c+1} \geq \sum_{i = 1}^c a_i$.  If $a_{c+1} = a_c$, the algorithm skips \ref{p2} and there is nothing to show.  

	Otherwise, $a_{c+1} \geq a > a_c$ implies that $\ee^{\elims + 1} > \ee^\elims$.
	Further, the degree sequences $\ee^{\elims + k}$ and $\ee^{\elims + k + 1}$ differ only in position $c-k$.  There, we have 
	\begin{align*}
	\ee^{\elims + k+1}_{c-k} &= \sum_{i = 1}^k a_i + a_{c+1} \\
	&\geq \sum_{i = c-k}^{c} a_i = \ee^{\elims + k}_{c-k}
	\end{align*} since $a_{c+1} \geq a > \sum_{i = c-k}^{c} a_i$.

	Therefore, the degree sequences from \ref{p1} and \ref{p2} are totally ordered.  By Lemma~\ref{lem:symmetry}, it follows that the degree sequences from all three phases form a totally ordered set.
\end{proof}

\begin{cor} If $a_{c+1} \geq a$ and Algorithm~\ref{NewAlgorithm} produces a decomposition of $\beta(a_1,\ldots,a_{c+1})$ where each $y_s$ is non-negative, then that decomposition agrees with the decomposition of $\beta(a_1,\ldots,a_{c+1})$ obtained by Algorithm~\ref{StandardDecomp}.
\end{cor}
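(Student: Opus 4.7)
The plan is to leverage the uniqueness statement in Theorem~\ref{thm:uniqueDecomp}: any Betti diagram of a finite-length module admits a \emph{unique} decomposition of the form $\sum z_i \pure{\dd^i}$ in which $\{\dd^i\}$ is a strictly increasing chain of degree sequences and each $z_i$ is strictly positive; moreover, this unique representation is precisely what Algorithm~\ref{StandardDecomp} returns. So it suffices to verify that the expression produced by Algorithm~\ref{NewAlgorithm}, under the hypotheses of the corollary, satisfies the conditions of this uniqueness statement.

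First, by Theorem~\ref{thm:Ezero}, Algorithm~\ref{NewAlgorithm} produces an honest equality $\beta(a_1,\ldots,a_{c+1}) = \sum_{s} y_s \pure{\ee^s}$ with no error term. Second, the size hypothesis $a_{c+1} \geq a$ lets us invoke Proposition~\ref{lem:chain}, which guarantees that the degree sequences $\{\ee^s\}$ produced by Algorithm~\ref{NewAlgorithm} form a totally ordered chain. Third, the corollary assumes each $y_s$ is non-negative.

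The only subtlety is that Theorem~\ref{thm:uniqueDecomp} requires \emph{strictly positive} coefficients, whereas some of the $y_s$ may equal zero. To reconcile this, I simply discard any terms with $y_s = 0$ from the expression; the surviving degree sequences still form a totally ordered subchain, the surviving coefficients are now strictly positive, and the equality with $\beta(a_1,\ldots,a_{c+1})$ is preserved. The pruned decomposition therefore satisfies all hypotheses of Theorem~\ref{thm:uniqueDecomp}, and by uniqueness it must coincide with the output of Algorithm~\ref{StandardDecomp}. Since Algorithm~\ref{NewAlgorithm} and Algorithm~\ref{StandardDecomp} thus yield the same non-zero summands with the same coefficients, the two decompositions agree. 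I do not anticipate any real obstacle here, since the technical work (vanishing of $E$ and the chain property of $\{\ee^s\}$) has already been carried out in Theorem~\ref{thm:Ezero} and Proposition~\ref{lem:chain}; the corollary is a clean application of the uniqueness of the Boij-S\"oderberg decomposition.
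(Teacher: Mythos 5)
Your proof is correct and matches the argument the paper implicitly relies on: the paper states this corollary without a separate proof, treating it as an immediate consequence of Theorem~\ref{thm:Ezero} (so $E=0$), Proposition~\ref{lem:chain} (the chain condition), and the uniqueness in Theorem~\ref{thm:uniqueDecomp}. Your step of pruning any zero coefficients before invoking uniqueness is the right way to reconcile the corollary's hypothesis of non-negativity with the strict positivity required by the uniqueness theorem.
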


We next formalize a relationship between coefficients in the Algorithm~\ref{StandardDecomp} decompositions of $\beta(\lst a c)$ and $\beta(\lst a {c+1})$, using \ref{p1} and \ref{p3} of Algorithm~\ref{NewAlgorithm}.

\begin{definition} 
	Consider a diagram $D = \beta(a_1,\ldots,a_c)$ with decomposition
	\[
		\beta(a_1,\ldots,a_c) = \sum_{s = 1}^\elims z_s \pi(\dd^s),
	\]
	obtained from Algorithm~\ref{StandardDecomp}. We define the \defi{remainders of $D$ relative to $a_{c+1}$} 
	to be the numbers $r_s$ such that $y_s = a_{c+1} z_s- r_s$, where the $y_s$ are the coefficients obtained by applying Algorithm~\ref{NewAlgorithm} to $\beta(a_1,\ldots,a_{c+1})$.
\end{definition}

\begin{theorem}\label{thm:recursive-stuff}
	Consider $\beta(a_1,\ldots,a_c)$, the Betti diagram of the complete intersection ideal generated in degrees $a_1\leq \ldots \leq a_c$ of elimination size $\elims$, for which the decomposition obtained from Algorithm~\ref{StandardDecomp},\[
	\beta(a_1,\ldots,a_c) = \sum_{s = 1}^\elims z_s \pi(\dd^s),
	\]
	has no instances of mass elimination.

	Let $\beta^{c+1} = \beta(a_1,\ldots,a_c,a_{c+1})$ for $a_{c+1} \geq a_c$, and set $a = \sum_{i = 1}^{c} a_i$.  Given the decomposition obtained from Algorithm~\ref{NewAlgorithm} and the remainders $r_s$ of $\beta(a_1,\ldots,a_c)$ relative to $a_{c+1}$, then
	\[
		\beta^{c+1} = \sum_{s = 1}^N y_s \pi(\ee^s),
	\] 
	and for $1 \leq s \leq \elims$, $y_s = z_s a_{c+1} - r_s$ with $r_s$ defined recursively in terms of the previous remainders.  Indeed, $r_1 = (j_1 -a)z_1$ and
	\[
		r_k = \left(\frac{j_k-a}{p_k}b_k-\sum_{s = 1}^{k-1}\frac{\pi(\dd^s)_{i_k,j_k}}{p_k}r_s\right)
	\] 
	for $2 \leq k \leq \elims$, where $b_k, i_k, j_k$ and $p_k$ are described in Remark~\ref{rmk:notation}.

	Furthermore, $a_{c+1} > \max\{a,\frac{r_1}{z_1},\ldots,\frac{r_\elims}{z_\elims}\}$ implies $y_s > 0$.

\end{theorem}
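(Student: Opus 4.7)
The plan is to prove the recursive formula $y_s = z_s a_{c+1} - r_s$ for $1 \leq s \leq \elims$ by induction on $s$, driven by a single algebraic identity relating entries of $\pi(\ee^s)$ to entries of $\pi(\dd^s)$.

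The first step is to establish the key identity: for every $t$ and every $0 \leq i \leq c$,
\[
\pi(\ee^t)_{i,\,d_i^t} \;=\; \frac{\pi(\dd^t)_{i,\,d_i^t}}{a + a_{c+1} - d_i^t},
\]
with both sides equal to zero whenever row $i$ is paired with any degree other than $d_i^t$. This is immediate from the product formula for pure diagrams, since $\ee^t = \concat(\dd^t, a + a_{c+1})$ differs from $\dd^t$ only by the adjoined top entry, which contributes the single extra factor $1/(a + a_{c+1} - d_i^t)$ at any row $i \leq c$; the denominator is positive because every relevant $d_i^t$ is at most $a$.

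Next I would record the equations that characterize each algorithm's coefficients. Since $\beta^c$ has no instances of mass elimination, step $s$ of Algorithm~\ref{StandardDecomp} is characterized by
\[
z_s p_s \;=\; b_s - \sum_{t < s} z_t\, \pi(\dd^t)_{i_s, j_s},
\]
which records that the running diagram first becomes zero at $(i_s, j_s)$ exactly at step $s$. For Algorithm~\ref{NewAlgorithm}, the hypothesis $a_{c+1}\geq a$ combined with Remark~\ref{rmk:notation} gives $(\beta^{c+1})_{i_s, j_s} = b_s$, so \ref{p1} forces
\[
y_s\, \pi(\ee^s)_{i_s, j_s} \;=\; b_s - \sum_{t < s} y_t\, \pi(\ee^t)_{i_s, j_s}.
\]
Applying the key identity to every term and clearing the common denominator $(a + a_{c+1} - j_s)$ converts the previous line into
\[
y_s p_s \;=\; (a + a_{c+1} - j_s)\, b_s - \sum_{t < s} y_t\, \pi(\dd^t)_{i_s, j_s}. \qquad (\dagger)
\]

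The induction now runs cleanly. For $s = 1$, the sum in $(\dagger)$ is empty, so $y_1 = (a + a_{c+1} - j_1)\, z_1 = a_{c+1} z_1 - (j_1 - a) z_1$, confirming $r_1 = (j_1 - a) z_1$. For the inductive step, substitute $y_t = z_t a_{c+1} - r_t$ into $(\dagger)$, separate the coefficient of $a_{c+1}$, and use the standard-algorithm equation to rewrite $\sum_{t<s} z_t\, \pi(\dd^t)_{i_s, j_s}$ as $b_s - z_s p_s$. The $a_{c+1} b_s$ contributions cancel and what remains rearranges to
\[
y_s \;=\; a_{c+1} z_s - \left(\frac{j_s - a}{p_s}\, b_s - \sum_{t < s} \frac{\pi(\dd^t)_{i_s, j_s}}{p_s}\, r_t\right),
\]
matching the stated recursion for $r_s$. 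The positivity claim follows at once: each $z_s$ is positive, so the condition $a_{c+1} > r_s/z_s$ is equivalent to $y_s > 0$, and the hypothesis that $a_{c+1}$ exceeds the maximum of $a$ and all $r_s/z_s$ forces $y_s > 0$ for every $1 \leq s \leq \elims$; including $a$ in the maximum also secures the chain condition of Proposition~\ref{lem:chain} and the Remark~\ref{rmk:notation} identity that underpins $(\dagger)$. The main bookkeeping obstacle is ensuring that the pure-diagram identity is applied uniformly across the sums, including the terms where $\pi(\dd^t)_{i_s, j_s}$ vanishes; these contribute zero on both sides of the identity and can safely be included, but this observation should be made explicit when writing up the details.
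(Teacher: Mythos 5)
Your proof is correct and takes essentially the same route as the paper's: the key identity you isolate is exactly Lemma~\ref{lem:easy-obs}\eqref{easy-obs:b}, your equation $(\dagger)$ is the paper's Equation~\eqref{eq:substitution} after clearing denominators, and the inductive substitution with the cancellation of the $a_{c+1}b_s$ terms matches the paper's computation line for line. The one small thing you make more explicit than the paper is the standard-algorithm characterization $z_s p_s = b_s - \sum_{t<s} z_t \pi(\dd^t)_{i_s,j_s}$, which the paper uses silently to recognize the coefficient of $a_{c+1}$ as $z_k$; that added transparency is fine and does not change the argument.
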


First we collect a few useful observations.

\begin{lemma}\label{lem:easy-obs} Given the hypotheses and notation of Theorem~\ref{thm:recursive-stuff}, 
\begin{enumerate}[label=(\alph*)]
\item\label{easy-obs:a} there is an inequality $a - j_k > 0$, and
\item\label{easy-obs:b} $\pure{\ee^s}_{i_k,j_k} = 0$ or $\pure{\ee^s}_{i_k,j_k} = \pure{\dd^s}_{i_k,j_k}/(a+a_{c+1} - j_k)$.
\end{enumerate}
\end{lemma}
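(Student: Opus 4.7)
The plan is to verify both parts by a short direct computation from the definitions, with no induction or deep machinery required.

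For part \ref{easy-obs:a}, I would unpack what $j_k$ is. By the Koszul description of $\beta^c = \beta(a_1,\dots,a_c)$ recalled in Subsection~\ref{intro:notation}, the nonzero entries of $\beta^c$ sit exactly at positions $(i,j)$ where $j$ equals the sum of some $i$-element subset of $\{a_1,\dots,a_c\}$. The only subset summing to $a$ is the full set, so the unique nonzero entry with $j=a$ is the bottom-right entry $(c,a)$, which by construction is $(i_\elims, j_\elims)$. Hence for $k<\elims$ the position $(i_k, j_k)$ comes from a proper subset, and because each $a_l$ is a positive integer this forces $j_k < a$.

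For part \ref{easy-obs:b}, I would apply the product formula for $\pure{\cdot}$ from Subsection~\ref{intro:BS}. Since $\ee^s = \concat(\dd^s, a+a_{c+1})$, we have $\ee^s_i = \dd^s_i$ for $0\leq i\leq c$ and $\ee^s_{c+1} = a+a_{c+1}$. In particular, for $i_k \leq c$ the column supporting the (unique) nonzero row-$i_k$ entry of $\pure{\ee^s}$ is the same as for $\pure{\dd^s}$. Therefore, if $j_k \ne \dd^s_{i_k}$ both entries are zero, and if $j_k = \dd^s_{i_k}$ the product formula gives
\[
\pure{\ee^s}_{i_k,j_k} \;=\; \prod_{\substack{0\leq \ell \leq c+1 \\ \ell \ne i_k}} \frac{1}{|\ee^s_{i_k} - \ee^s_\ell|} \;=\; \pure{\dd^s}_{i_k,j_k}\cdot \frac{1}{|j_k - (a+a_{c+1})|}.
\]
By part \ref{easy-obs:a} we have $j_k \leq a < a+a_{c+1}$ (using $a_{c+1}\geq a_c \geq 1$), so the absolute value opens as $a+a_{c+1}-j_k$, yielding the stated identity.

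Neither part presents a real obstacle; the only subtlety is the range of $k$ in part \ref{easy-obs:a}. The strict inequality $a-j_k > 0$ is exactly the statement that $(i_k,j_k)$ is not the bottom-right corner of $\beta^c$, which holds for $1 \leq k \leq \elims - 1$. The boundary case $k = \elims$ (where $j_\elims = a$) is harmless for the subsequent application to the recursion for $r_k$, since the coefficient $j_k - a$ simply vanishes in that case.
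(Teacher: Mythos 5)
Your argument is essentially identical to the paper's: part (a) follows from the Koszul description of $\beta^c$ (an entry in column $i_k$ sits at a degree that is a sum of $i_k$ of the $a_l$'s), and part (b) from comparing the product formula for $\pure{\ee^s}$ and $\pure{\dd^s}$, which differ only by the extra factor coming from $\ee^s_{c+1} = a + a_{c+1}$. Your closing remark about the range of $k$ is a welcome refinement: the paper's phrasing glosses over the fact that $j_\elims = a$ by convention, so the strict inequality in (a) really holds for $k < \elims$ and degenerates harmlessly at $k = \elims$.
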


\begin{proof} For \eqref{easy-obs:a}, notice that if $j_k$ is in column $i_k$, then $j_k$ is a sum of $i_k$ elements of $\{a_1,\ldots,a_c\}$, so $j_k < a$.  For \eqref{easy-obs:b}, observe that $\pi(\ee^s)_{i_k,j_k} = 0$ if $\ee^s_{i_k} \not = j_k$, while if $\ee^s_{i_k} = j_k$ (which means that $\dd^s_{i_k} = j_k$ as well), \[\pure{\ee^s}_{i_k,j_k} = \frac1{\prod_{\ell \not = i_k}|\ee^s_{\ell} -\ee^s_{i_k}|} = \pure{\dd^s}_{i_k,j_k}/(a+a_{c+1} - j_k).\]
\end{proof}

\begin{proof}[Proof of Theorem~\ref{thm:recursive-stuff}] 
Observe that Algorithm~\ref{NewAlgorithm} gives $b_1 = z_1 p_1$ and $b_1 = y_1 p_1/(a+a_{c+1} -j_1)$.  Thus $y_1 = (a+a_{c+1} - j_1)z_1 = a_{c+1}z_1 - (j_1-a)z_1$; that is, $r_1 = (j_1 -a)z_1$.

Fix $1 < k < \elims$ and suppose that $y_{k-1} = z_{k-1} a_{c+1} - r_{k-1}$. 

Using Algorithm~\ref{NewAlgorithm}, we have that $b_k = \sum_{s = 1}^k y_s \pure{\ee^s}_{i_k,j_k}$. Hence 
\begin{equation}\label{eq:substitution} y_k q_k = b_k - \sum_{s = 1}^{k-1} y_s \pure{\ee^s}_{i_k,j_k}.\end{equation} Furthermore, $q_k = \frac{p_k}{a+a_{c+1}-j_k}$. From these equations and Lemma~\ref{lem:easy-obs}, we obtain
\begin{align*}
y_k &= \frac{a + a_{c+1} - j_k}{p_k} \left(b_k - \sum_{s = 1}^{k-1} y_s \pure{\ee^s}_{i_k,j_k}\right)\\
&= \frac{a + a_{c+1} - j_k}{p_k}b_k - \frac{a + a_{c+1} - j_k}{p_k}\sum_{s = 1}^{k-1} y_s \pure{\ee^s}_{i_k,j_k}\\
&= \frac{a + a_{c+1} - j_k}{p_k}b_k - \sum_{s = 1}^{k-1}\frac{\pure{\dd^s}_{i_k,j_k}}{p_k} y_s.
\end{align*}

Now we apply the induction hypothesis:
\begin{align*}
y_k &= \frac{a + a_{c+1} - j_k}{p_k}b_k - \sum_{s = 1}^{k-1}\frac{\pure{\dd^s}_{i_k,j_k}}{p_k} (z_s a_{c+1}-r_s)\\
&= \left(\frac{b_k}{p_k} - \sum_{s = 1}^{k-1}\frac{\pure{\dd^s}_{i_k,j_k}}{p_k}z_s\right)a_{c+1} + \left(\frac{a-j_k}{p_k}b_k + \sum_{s = 1}^{k-1}\frac{\pure{\dd^s}_{i_k,j_k}}{p_k}r_s\right)\\
&= z_k a_{c+1} - \left(\frac{j_k-a}{p_k}b_k - \sum_{s = 1}^{k-1}\frac{\pure{\dd^s}_{i_k,j_k}}{p_k}r_s\right).
\end{align*}

Hence $r_k = \left(\frac{j_k-a}{p_k}b_k-\sum_{s = 1}^{k-1}\frac{\pure{\dd^s}_{i_k,j_k}}{p_k}r_s\right)$ is recursively defined, and if $a_{c+1} > \frac{r_i}{z_i}$, we have that $y_i > 0$.
\end{proof}

\begin{cor}\label{cor:algisBS}
	When $a_{c+1} > \max\left\{a,\frac{r_1}{z_1},\ldots,\frac{r_\elims}{z_\elims}\right\}$, then Algorithm~\ref{NewAlgorithm} produces the Boij-S\"oderberg decomposition as in Algorithm~\ref{StandardDecomp}.
\end{cor}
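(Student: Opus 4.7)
The plan is to recognize this corollary as a direct consequence of the earlier results in this section, specifically Theorem~\ref{thm:Ezero}, Proposition~\ref{lem:chain}, Theorem~\ref{thm:recursive-stuff}, and Lemma~\ref{lem:symmetry}, assembled through the uniqueness of the Boij-S\"oderberg decomposition guaranteed by Theorem~\ref{thm:uniqueDecomp}. Concretely, I will verify that, under the stated hypothesis on $a_{c+1}$, the output of Algorithm~\ref{NewAlgorithm} satisfies the three defining properties of a Boij-S\"oderberg decomposition: it is an expression of $\beta^{c+1}$ as a sum of scalar multiples of pure diagrams, its degree sequences form a totally ordered chain, and all of its coefficients are strictly positive. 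Since Theorem~\ref{thm:uniqueDecomp} asserts uniqueness of such an expression, this will identify the output of Algorithm~\ref{NewAlgorithm} with that of Algorithm~\ref{StandardDecomp}.

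I would carry out the three verifications in the following order. That Algorithm~\ref{NewAlgorithm} produces a valid decomposition, i.e.\ that $E = 0$, is exactly Theorem~\ref{thm:Ezero}. That the degree sequences $\{\ee^s\}_{s=1}^{N}$ form a chain is Proposition~\ref{lem:chain}, whose hypothesis $a_{c+1} \geq a$ is subsumed by the corollary's assumption $a_{c+1} > a$. Positivity of the \ref{p1} coefficients $y_1,\ldots,y_\elims$ is the final assertion of Theorem~\ref{thm:recursive-stuff}, and positivity of the \ref{p3} coefficients $y_{\elims+c},\ldots,y_N$ follows immediately from those \ref{p1} positivities via the symmetry $y_s = y_{N-s+1}$ recorded in Lemma~\ref{lem:symmetry}.

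The main obstacle I anticipate lies in the \ref{p2} coefficients $y_{\elims+1},\ldots,y_{\elims+c-1}$, for which none of the preceding results gives positivity directly. My plan here is to unpack the defining equation for each such coefficient: by construction, $y_{\elims+k}$ is chosen so that subtracting $y_{\elims+k}\pure{\ee^{\elims+k}}$ zeros out the top remaining entry of column $c-k$ in the running diagram, so $y_{\elims+k}$ equals the ratio of that running entry to a positive entry of the pure diagram. It therefore suffices to show that the running entry is itself positive. I would establish this by tracking which positions of $\beta^{c+1}$ are affected by the \ref{p1} subtractions, using the explicit description $\ee^s = \concat(\dd^s, a + a_{c+1})$ for $1 \leq s \leq \elims$, and ruling out the possibility that \ref{p1} has prematurely annihilated the targeted entry. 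Once positivity of all $N$ coefficients is verified, Theorem~\ref{thm:uniqueDecomp} closes the argument by identifying the decomposition with the greedy one produced by Algorithm~\ref{StandardDecomp}.
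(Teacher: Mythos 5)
Your overall framing matches the paper's: both arguments reduce to showing that Algorithm~\ref{NewAlgorithm} produces a sum of pure diagrams whose degree sequences form a chain and whose coefficients are all positive, from which the identification with the output of Algorithm~\ref{StandardDecomp} follows (the paper's ``hence must be the Boij-S\"oderberg decomposition'' is an implicit appeal to the uniqueness of Theorem~\ref{thm:uniqueDecomp}, which you make explicit). Your handling of \ref{p1} via Theorem~\ref{thm:recursive-stuff} and of \ref{p3} via the symmetry $y_s = y_{N-s+1}$ from Lemma~\ref{lem:symmetry} is correct and somewhat more systematic than the paper's treatment, which instead argues that each \ref{p1} choice coincides with the greedy choice of Algorithm~\ref{StandardDecomp} by a small contradiction argument on the maximal subtraction $q$.

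Where the two accounts genuinely diverge is \ref{p2}, which you rightly flag as the crux. You propose to establish positivity of $y_{\elims+1},\ldots,y_{\elims+c-1}$ directly, by tracking which entries of $\beta^{c+1}$ the \ref{p1} subtractions touch and showing the targeted entry in column $c-k$ survives with positive value. The paper instead argues at the level of elimination orders: having shown the first $\elims$ steps coincide with the greedy algorithm, it claims the \ref{p2} degree sequences are the only ones consistent with keeping the residual diagram pure, so the greedy algorithm must also select them, and positivity comes for free from the greedy mechanism. Both are plausible routes; yours is more computational and more in keeping with the recursive spirit of Theorem~\ref{thm:recursive-stuff}, while the paper's is shorter but relies on a somewhat terse assertion about what ``any other order'' would do.

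The one caution: your \ref{p2} argument is still only a plan. You would need to show concretely that, after subtracting the \ref{p1} pure diagrams, the entry at column index $c-k$ and degree $\sum_{\ell=k+1}^{c} a_\ell$ (the top remaining entry in that column) has not been over-subtracted. Since each $\pure{\ee^s}$ for $s\le\elims$ is supported on $\ee^s = \concat(\dd^s, a + a_{c+1})$, none of the \ref{p1} diagrams has support at the positions \ref{p2} targets unless $\dd^s_{c-k}$ already equals that degree, so what actually needs checking is the quantitative inequality $\sum_{s\le\elims} y_s \pure{\ee^s}_{c-k,\,j} \le \beta^{c+1}_{c-k,\,j}$, not just that the support sets are compatible. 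That step is left unverified in the proposal, and it is precisely what the paper handles by shifting to the elimination-order argument; you should either carry it out or recognize that the paper's route is a legitimate shortcut around it.
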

\begin{proof}  By Theorem~\ref{thm:Ezero} and Proposition~\ref{lem:chain}, we have a decomposition with degree sequences that form a chain.  It is enough to show that the coefficients in the decomposition are positive.  

By choice of $a_{c+1}$, the coefficients $y_s$ for $1 \leq s \leq \elims$ are positive and eliminate the top part of the diagram. If this elimination order differs from the Boij-S\"oderberg decomposition, then at some step $s$, $y_s$ was not the greedy choice.  However, $\ee^s$ is the necessary degree sequence because it is the topmost degree sequence at this step.  Together, these statements mean that, by Algorithm~\ref{StandardDecomp}, there exists a maximal $q > y_s \in \Q$ such that when subtracting $q\pure{{\ee^s}}$ the resulting diagram has non-negative entries. But since subtracting $y_s\pure{\ee^s}$ eliminates entry $b_s$, then any $q > y_s$ results in a diagram with a negative entry in this position, a contradiction. 

As such, up to $\elims$, the algorithm respects the Boij-S\"oderberg decomposition. Further, our choice of $a_{c+1}$ forces the \ref{p2} elimination order to also respect the Boij-S\"oderberg decomposition, as any other order would create non-pure degree sequences. This forces all of the coefficients to be positive rational values and hence must be the Boij-S\"oderberg decomposition.
\end{proof}

By Theorem \ref{thm:recursive-stuff}, for sufficiently large $a_{c+1}$, we have characterized the behavior of the degree sequences in all phases of the algorithm, as well as the coefficients in \ref{p1} and \ref{p3}. In particular, we know both the degree sequences and the known coefficients are completely determined by the decomposition of $\beta(a_1,\ldots,a_c)$. Unfortunately, the coefficients in \ref{p2} are still elusive. Using \cite{M2} we were able to generate enough examples to form the following conjecture. We also show this conjecture holds for codimension at most 3 in Corollary~\ref{cor:conj}.
 
\begin{conjecture}\label{conj:coeff}
Assume the notation of Theorem~\ref{thm:recursive-stuff}. If 
\[
	a_{c+1} > \max\left\{a,\frac{r_1}{z_1},\ldots,\frac{r_\elims}{z_\elims}\right\},
\]
then for $\elims<s<\elims+c$,
\begin{equation}\label{eq:conj-coeff}
y_s=c! \cdot a_1\cdots a_c \cdot \left(a_{c+1}-\sum_{i=1}^{s-n}(a_{c+1-i}-a_{i})\right).
\end{equation}
\end{conjecture}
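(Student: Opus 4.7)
The approach is induction on $k$, where $s = \elims + k$ indexes the Phase~2 coefficients for $1 \le k \le c-1$. At step $\elims + k$ of Algorithm~\ref{NewAlgorithm}, from Step~\ref{p2.1} the degree sequence $\ee^{\elims+k}$ differs from $\ee^{\elims+k-1}$ only at position $c-k+1$, so position $c-k$ still holds the value $\dd^{\elims}_{c-k} = a_{k+1} + a_{k+2} + \cdots + a_c$. The coefficient $y_{\elims+k}$ is chosen to eliminate the entry at position $(c-k, j^*_k)$ where $j^*_k := a_{k+1} + \cdots + a_c$, yielding the defining equation
\[
y_{\elims+k} \cdot \pure{\ee^{\elims+k}}_{c-k,\, j^*_k} \;=\; \beta^{c+1}_{c-k,\, j^*_k} \;-\; \sum_{s=1}^{\elims+k-1} y_s \cdot \pure{\ee^s}_{c-k,\, j^*_k}.
\]

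I would make the right-hand side explicit as follows. For generic $a_1, \dots, a_c$ (which suffices since both sides of the conjectured identity are polynomial in the $a_i$), $\beta^{c+1}_{c-k,\, j^*_k} = 1$, because the only $(c-k)$-subset of $\{a_1, \dots, a_{c+1}\}$ summing to $j^*_k$ is $\{a_{k+1}, \dots, a_c\}$. Next, $\pure{\ee^s}_{c-k, j^*_k}$ is nonzero iff $\ee^s_{c-k} = j^*_k$; this holds for every previous Phase~2 diagram (since their position $c-k$ has not yet been modified) and for precisely those Phase~1 diagrams with $\dd^s_{c-k} = a_{k+1} + \cdots + a_c$. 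The Phase~1 coefficients are given by Theorem~\ref{thm:recursive-stuff} as $y_s = z_s a_{c+1} - r_s$, while the prior Phase~2 coefficients are known inductively. Finally, $\pure{\ee^{\elims+k}}_{c-k, j^*_k} = \prod_{\ell \ne c-k} 1/|e^{\elims+k}_\ell - j^*_k|$ splits into $k+1$ factors involving $a_{c+1}$ (from positions $c-k+1, \dots, c+1$) and $c-k$ factors purely in $a_1, \dots, a_c$, both of which can be written out explicitly from the description in \ref{p2.1}.

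Assembling these pieces and simplifying should produce an expression linear in $a_{c+1}$, at which point the task reduces to matching its coefficients with those of $c! \cdot a_1 \cdots a_c \cdot \bigl(a_{c+1} - \sum_{i=1}^{k}(a_{c+1-i} - a_i)\bigr)$.  (We note that the $n$ appearing in the conjecture's formula is presumably a typo for $\elims$.)  Along the way, the symmetry $y_{\elims+k} = y_{\elims+c-k}$ from Lemma~\ref{lem:symmetry} cuts the work roughly in half and provides a useful consistency check, as does the codimension-$\leq 3$ case.

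The principal obstacle is the Phase~1 sum: identifying exactly which $\dd^s$ in the decomposition of $\beta^c$ satisfy $\dd^s_{c-k} = a_{k+1} + \cdots + a_c$ depends on the full combinatorics of $\elimorder{\beta^c}$, which is intricate when $c \ge 4$ and can include mass elimination. A promising workaround is to exploit Theorem~\ref{thm:Ezero} globally: since $\sum_{s=1}^N y_s \pure{\ee^s}_{c-k, j^*_k}$ equals $\beta^{c+1}_{c-k, j^*_k}$ exactly, one may combine this with the Herzog-Kuhl equations and the Phase~3 symmetry (Lemma~\ref{lem:symmetry}) to re-express the Phase~1 contribution in closed form without enumerating individual terms, and then feed that closed form into the inductive step.
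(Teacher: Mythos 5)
This statement is a \emph{conjecture} in the paper, not a proved theorem: the authors state explicitly that they arrived at it by generating examples in Macaulay2, and they only establish it for codimension $c \leq 3$ (Corollary~\ref{cor:conj}), which follows from the explicit, case-by-case computations in Theorem~\ref{thm:codim4} together with the codimension-three structure theorem of \cite{GJMRSW}. There is no general proof in the paper for you to be compared against, so your proposal should be judged as an attempt at new mathematics rather than a reconstruction.

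As such an attempt, the setup is sound and your identification of $j^*_k = a_{k+1} + \cdots + a_c$ as the target entry in Phase~2, the reduction to the defining equation via $\beta^{c+1}_{c-k,j^*_k} - \sum_{s < \elims+k} y_s \pure{\ee^s}_{c-k,j^*_k}$, the observation that one may work with generic $a_i$ since both sides are polynomial, and the catch that ``$n$'' in \eqref{eq:conj-coeff} should read $\elims$ are all correct. But the argument is not complete, and the gap you flag is exactly the hard part. Determining which Phase~1 degree sequences $\dd^s$ satisfy $\dd^s_{c-k} = a_{k+1}+\cdots+a_c$, and then summing $y_s\pure{\ee^s}_{c-k,j^*_k}$ over them, requires control of $\elimorder{\beta^c}$ that is unavailable in general codimension; this is precisely where the authors' own analysis (Lemma~\ref{lem:codim4r} and Theorem~\ref{thm:codim4}) retreats to a hand-computed case split in $c=3$. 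Your proposed workaround — combining Theorem~\ref{thm:Ezero} with the Herzog--Kuhl equations and the Phase~3 symmetry to extract the Phase~1 contribution in closed form — is stated as a hope, not executed; the Herzog--Kuhl relations constrain column sums and alternating moments, and it is not obvious they isolate a single entry $(c-k, j^*_k)$ without again invoking the detailed combinatorics of the chain. Until that step is carried out, the proposal is a plausible blueprint for attacking the conjecture rather than a proof of it.
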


\section{Compatibility and Stability}\label{sec:stable}

The stable behavior of the Boij-S\"oderberg decompositions of ideal powers has been studied by S.\ Mayes-Tang \cite{mayestang} when the ideal $I$ in question is homogeneous in a single degree. In particular, for $k\gg 0$ the decompositions of $I^k$ have the following properties: the number of terms in the decompositions are constant; the shapes of the pure diagrams in the decompositions are the same; the coefficients in the decompositions are given by polynomials in $k$. Given this result, D.\ Erman and S.V.\ Sam \cite{ES} ask if similar asymptotic stabilization results can be expected in other contexts. In Proposition~\ref{prop:stabilize} we show a positive answer in relation to the elimination order.  

\begin{dfn}
	The elimination order of $\bet(\lst a {c+1})$ is \defi{compatible with} the elimination order of $\bet(\lst a c)$ if $\elimorder{\bet(\lst a c)}$ is the beginning of the sequence $\elimorder{\bet(\lst a {c+1})}$. 
\end{dfn}

\begin{prop}\label{prop:stabilize}
	Let $S = \kk [\lst x n]$ and $I = (\lst f {c+1})$ be an ideal generated by a homogeneous regular sequence with $\deg(f_i) = a_i$. If the Boij-S\"oderberg decomposition of $\bet(\lst a c)$ has no instance of mass elimination, then there exists an $N > 0$ such that for all $a_{c+1} > N$,
	\begin{enumerate}
	 	\item\label{prop:stabilize1} the number of terms in the Boij-S\"oderberg decomposition of $\bet(S/I)$ is constant;
	 	\item\label{prop:stabilize2} the elimination order of $\bet(S/I)$ is compatible with that of $\bet(\lst a c)$;
	 	\item\label{prop:stabilize3} the elimination order of $\bet(S/I)$ stabilizes; 
	 	\item\label{prop:stabilize4} if $\elims$ is the number of terms in the decomposition of $\bet(\lst a c)$, then the first and last $\elims$ coefficients in the Boij-S\"oderberg decomposition of $\bet(S/I)$ are given by linear polynomials in $a_{c+1}$. 
	\end{enumerate} 
\end{prop}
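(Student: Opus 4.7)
The plan is to deduce all four claims directly from Algorithm~\ref{NewAlgorithm} together with Theorem~\ref{thm:recursive-stuff} and Corollary~\ref{cor:algisBS}. Set
\[
N = \max\!\left\{a,\ \tfrac{r_1}{z_1},\ \ldots,\ \tfrac{r_\elims}{z_\elims}\right\},
\]
where the $r_s$ are the remainders of $\beta(a_1,\ldots,a_c)$ relative to $a_{c+1}$ produced by Theorem~\ref{thm:recursive-stuff}. Although each $r_s$ is defined using $a_{c+1}$, one checks from the recursion $r_1=(j_1-a)z_1$ and $r_k=\frac{j_k-a}{p_k}b_k-\sum_{s<k}\frac{\pure{\dd^s}_{i_k,j_k}}{p_k}r_s$ that the $r_s$ depend only on the data $(a_1,\ldots,a_c)$, not on $a_{c+1}$; hence $N$ is an honest constant. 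For every $a_{c+1}>N$, Corollary~\ref{cor:algisBS} guarantees that Algorithm~\ref{NewAlgorithm} coincides with the Boij-S\"oderberg decomposition produced by Algorithm~\ref{StandardDecomp}.

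For \eqref{prop:stabilize1}, note that Algorithm~\ref{NewAlgorithm} always produces exactly $2\elims+c-1$ degree sequences (namely $\elims$ from \ref{p1}, $c-1$ from \ref{p2}, and $\elims$ from \ref{p3}), a quantity independent of $a_{c+1}$. Since for $a_{c+1}>N$ each associated coefficient $y_s$ is strictly positive, this is precisely the number of terms in the Boij-S\"oderberg decomposition of $\beta(S/I)$.

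For \eqref{prop:stabilize2} and \eqref{prop:stabilize3}, observe that the positions $(i_s,j_s)$ eliminated in \ref{p1} are exactly those eliminated in the Boij-S\"oderberg decomposition of $\beta(a_1,\ldots,a_c)$, so the first $\elims$ steps agree on the nose, giving compatibility. The positions eliminated in \ref{p2} are prescribed by the algorithm as the topmost remaining entries of columns $c-1,c-2,\ldots,1$ in that order, and these positions are determined entirely by $(a_1,\ldots,a_c)$; similarly, \ref{p3} is forced by the dual symmetry of Lemma~\ref{lem:symmetry}. Since none of these positional choices involves $a_{c+1}$, the elimination order stabilizes for all $a_{c+1}>N$, establishing \eqref{prop:stabilize3}.

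For \eqref{prop:stabilize4}, Theorem~\ref{thm:recursive-stuff} gives $y_s=z_s a_{c+1}-r_s$ for $1\le s\le \elims$, which is linear in $a_{c+1}$ since $z_s$ and $r_s$ depend only on $(a_1,\ldots,a_c)$. By the symmetry established in Lemma~\ref{lem:symmetry}, $y_s=y_{N'-s+1}$ where $N'=2\elims+c-1$, so the last $\elims$ coefficients are also linear in $a_{c+1}$. The main subtlety in the whole argument is verifying that the recursively defined remainders $r_s$ and the positional data used in \ref{p2} and \ref{p3} are independent of $a_{c+1}$; once this is checked by a straightforward induction on the recursion in Theorem~\ref{thm:recursive-stuff}, every claim follows cleanly from the structure of Algorithm~\ref{NewAlgorithm}.
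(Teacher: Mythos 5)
Your proof is correct and takes essentially the same route as the paper: invoke Corollary~\ref{cor:algisBS} to identify Algorithm~\ref{NewAlgorithm} with the Boij--S\"oderberg decomposition for $a_{c+1}$ past the threshold, then read off all four claims from the structure of the algorithm and the linearity in Theorem~\ref{thm:recursive-stuff}. Your added observation that the remainders $r_s$ depend only on $(a_1,\ldots,a_c)$ is a worthwhile explicit check that the paper leaves implicit.
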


\begin{proof}
	We know by Corollary~\ref{cor:algisBS} that Algorithm~\ref{NewAlgorithm} is the Boij-S\"oderberg decomposition when $a_{c+1}$ is chosen large enough. From \ref{p3} we see there are $2\elims + c -1$ terms for all choices of $a_{c+1}$, hence \eqref{prop:stabilize1} holds. Similarly, \eqref{prop:stabilize2} can be seen from \ref{p1}, and for \eqref{prop:stabilize3}, the elimination order is fixed by Algorithm~\ref{NewAlgorithm}. Finally, for \eqref{prop:stabilize4}, Theorem~\ref{thm:recursive-stuff} shows that the appropriate coefficients are linear polynomials in $a_{c+1}$.
\end{proof}

It is worth noting that Conjecture~\ref{conj:coeff} implies \emph{all} the coefficients in the Boij-S\"oderberg decomposition of $\bet(S/I)$ are given by polynomials in $a_{c+1}$. This is the case for $c+1 = 4$. 

\begin{example}\label{ex:stabilze}
Let $I=(x_1^2,x_2^3,x_3^4,x_4^{a_4})$ in the polynomial ring $S = \kk[x_1,x_2,x_3,x_4]$. Using the results from Theorem~\ref{thm:codim4}, we are able to determine the decomposition of $\beta(S/I)$ as a function of $a_4$.
\begin{align*}
	\bet(S/I) & =  (42\, a_4+294) \cdot \pure{\ee^1} + (12\, a_4-36)\cdot \pure{\ee^2} \\ 
		& + (36\, a_4+378)\cdot \pure{\ee^3} + (12\, a_4-144)\cdot \pure{\ee^4} \\
		& + (42\, a_4-204)\cdot \pure{\ee^5} + (144\, a_4-288)\cdot \pure{\ee^6} \\
		& + (144\, a_4-288)\cdot \pure{\ee^7}+ (42\, a_4-204)\cdot \pure{\ee^8} \\
		& + (12\, a_4-144)\cdot \pure{\ee^9} + (36\, a_4+378)\cdot \pure{\ee^{10}}\\ 
		& + (12\, a_4-36)\cdot \pure{\ee^{11}} + (42\, a_4+294) \cdot \pure{\ee^{12}}
\end{align*}
Observe that all the coefficients are linear in $a_4$ and the number of terms is constant (provided $a_4 \not = 12$). In this example, $a = 9$ and the maximum given in Theorem~\ref{thm:recursive-stuff} is $12$, and we give a brief analysis of the behavior of the decomposition around this bound. First, when $a_4 = 12$, observe that we have the traditional Boij-S\"oderberg decomposition with mass elimination: $y_4 = y_9 = 0$, $y_s > 0$ otherwise, and the degree sequences form a chain.  An analysis of the degree sequences will show that both compatibility and stability occur when $a_4 \geq 13$, confirming the assertions in Example~\ref{ex:EarlyCodim3DecompPart3}. When $9 \leq a_4 < 12$, we see that some of the coefficients are negative (but none are zero).  Thus, in each case, the traditional Boij-S\"oderberg decomposition uses a different chain of degree sequences.  However, given such an $a_4$, there exists a change-of-basis map from the pure diagrams used in Algorithm~\ref{NewAlgorithm} to the pure diagrams used in Algorithm~\ref{StandardDecomp}.  Indeed, both sets of pure diagrams form a basis for the support of $\beta(S/I)$.  Determining the change-of-basis map is an area for further study.  Although we do not study diagrams for which $4 \leq a_4 < 9$ much in this paper, we note Algorithm~\ref{NewAlgorithm} does provide a decomposition in these cases by Theorem~\ref{thm:Ezero}, and in these cases the coefficients are positive rational numbers (though the degree sequences do not form a chain).

\def\scl{.2}
\begin{figure}
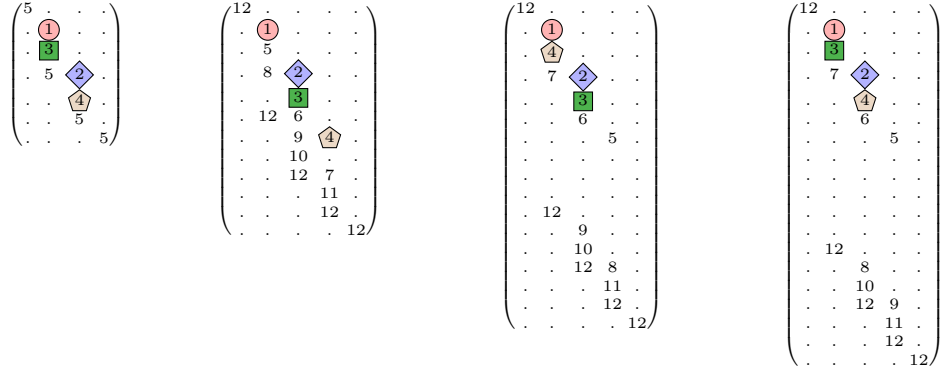

	\begin{subfigure}[t]{.11\textwidth}
	\tiny \setlength{\arraycolsep}{1pt}
	\[
		\begin{pmatrix}
		     5&\text{.}&\text{.}&\text{.}\\
		     \text{.}&\encircle{1}&\text{.}&\text{.}\\
		     \text{.}&\ensquare{3}&\text{.}&\text{.}\\
		     \text{.}&5&\enwithdiamond{2}&\text{.}\\
		     \text{.}&\text{.}&\entriangle{4}&\text{.}\\
		     \text{.}&\text{.}&5&\text{.}\\
		     \text{.}&\text{.}&\text{.}&5
		\end{pmatrix}
	\]
	\end{subfigure}\hfill
	\begin{subfigure}[t]{\scl\textwidth}
	\tiny \setlength{\arraycolsep}{1pt}
	\[
		\begin{pmatrix}
		     12&\text{.}&\text{.}&\text{.}&\text{.}\\
		     \text{.}&\encircle{1}&\text{.}&\text{.}&\text{.}\\
		     \text{.}&5&\text{.}&\text{.}&\text{.}\\
		     \text{.}&8&\enwithdiamond{2}&\text{.}&\text{.}\\
		     \text{.}&\text{.}&\ensquare{3}&\text{.}&\text{.}\\
		     \text{.}&12&6&\text{.}&\text{.}\\
		     \text{.}&\text{.}&9&\entriangle{4}&\text{.}\\
		     \text{.}&\text{.}&10&\text{.}&\text{.}\\
		     \text{.}&\text{.}&12&7&\text{.}\\
		     \text{.}&\text{.}&\text{.}&11&\text{.}\\
		     \text{.}&\text{.}&\text{.}&12&\text{.}\\
		     \text{.}&\text{.}&\text{.}&\text{.}&12
		\end{pmatrix}
	\] 
	\end{subfigure}\hfill
	\begin{subfigure}[t]{\scl\textwidth}
	\tiny \setlength{\arraycolsep}{1pt}
	\[
		\begin{pmatrix}
		     12&\text{.}&\text{.}&\text{.}&\text{.}\\
		     \text{.}&\encircle{1}&\text{.}&\text{.}&\text{.}\\
		     \text{.}&\entriangle{4}&\text{.}&\text{.}&\text{.}\\
		     \text{.}&7&\enwithdiamond{2}&\text{.}&\text{.}\\
		     \text{.}&\text{.}&\ensquare{3}&\text{.}&\text{.}\\
		     \text{.}&\text{.}&6&\text{.}&\text{.}\\
		     \text{.}&\text{.}&\text{.}&5&\text{.}\\
		     \text{.}&\text{.}&\text{.}&\text{.}&\text{.}\\
		     \text{.}&\text{.}&\text{.}&\text{.}&\text{.}\\
		     \text{.}&\text{.}&\text{.}&\text{.}&\text{.}\\
		     \text{.}&12&\text{.}&\text{.}&\text{.}\\
		     \text{.}&\text{.}&9&\text{.}&\text{.}\\
		     \text{.}&\text{.}&10&\text{.}&\text{.}\\
		     \text{.}&\text{.}&12&8&\text{.}\\
		     \text{.}&\text{.}&\text{.}&11&\text{.}\\
		     \text{.}&\text{.}&\text{.}&12&\text{.}\\
		     \text{.}&\text{.}&\text{.}&\text{.}&12
		\end{pmatrix}
	\]
	\end{subfigure}\hfill
	\begin{subfigure}[t]{\scl\textwidth}
	\tiny \setlength{\arraycolsep}{1pt}
	\[
		\begin{pmatrix}
		     12&\text{.}&\text{.}&\text{.}&\text{.}\\
		     \text{.}&\encircle{1}&\text{.}&\text{.}&\text{.}\\
		     \text{.}&\ensquare{3}&\text{.}&\text{.}&\text{.}\\
		     \text{.}&7&\enwithdiamond{2}&\text{.}&\text{.}\\
		     \text{.}&\text{.}&\entriangle{4}&\text{.}&\text{.}\\
		     \text{.}&\text{.}&6&\text{.}&\text{.}\\
		     \text{.}&\text{.}&\text{.}&5&\text{.}\\
		     \text{.}&\text{.}&\text{.}&\text{.}&\text{.}\\
		     \text{.}&\text{.}&\text{.}&\text{.}&\text{.}\\
		     \text{.}&\text{.}&\text{.}&\text{.}&\text{.}\\
		     \text{.}&\text{.}&\text{.}&\text{.}&\text{.}\\
		     \text{.}&\text{.}&\text{.}&\text{.}&\text{.}\\
		     \text{.}&12&\text{.}&\text{.}&\text{.}\\
		     \text{.}&\text{.}&8&\text{.}&\text{.}\\
		     \text{.}&\text{.}&10&\text{.}&\text{.}\\
		     \text{.}&\text{.}&12&9&\text{.}\\
		     \text{.}&\text{.}&\text{.}&11&\text{.}\\
		     \text{.}&\text{.}&\text{.}&12&\text{.}\\
		     \text{.}&\text{.}&\text{.}&\text{.}&12
	     \end{pmatrix}
	\]
	\end{subfigure}
	\caption{Elimination tables for $\bet(2,3,4)$, $\bet(2,3,4,6)$, $\bet(2,3,4,11)$, and $\bet(2,3,4,13)$ respectively.}
	\label{fig:stabilization}
\end{figure}

\end{example}

In codimension three, the structure theorem in \cite{GJMRSW} shows there are exactly two non-trivial elimination orders for the diagram $\bet(a_1,a_2,a_3)$. In particular, the case when $a_1=a_2 < a_3$ has a different elimination order than all other cases. In codimension four, even if we avoid mass elimination, there are still plenty of different elimination orders as detailed in \cite{GJMRSW}. However according to Proposition~\ref{prop:stabilize}, if we avoid mass elimination and $a_{c+1} \gg 0$, then the elimination order stabilizes. When mass elimination does occur, compatibility fails when decomposing $\bet(4,5,7,9,a_5)$ with large values of $a_5$, but the other results seem to hold. This begs the following open question: When does a complete intersection of codimension $c$ have mass elimination, and why does it affect the stability of the related codimension $c+1$ diagram?

\section{Case Study: Codimension Four}\label{sec:codim4}

When $c = 3$, we describe the remainders relative to $a_4$ when it satisfies the hypotheses in Theorem~\ref{thm:recursive-stuff}, which then allows us to find a closed formula for bound on $a_4$ in terms of $a_1$, $a_2$, and $a_3$.  Recall that no mass elimination occurs in codimension three, thus trivially satisfying that hypothesis of Theorem~\ref{thm:recursive-stuff}.

\begin{lem}\label{lem:codim4r}
	The remainders of $\bet(a_1,a_2,a_3,a_4)$ with respect to $a_4$ are given by the following when $a_1 < a_2 < a_3$:
	\begin{align*}
		r_1 &= -a_1 a_2 (a_2+a_3)^2, \\
		r_2 &=  a_1 a_2 (a_1 a_2+2 a_1 a_3-a_3^2), \\
		r_3 &= -a_1 a_2 (a_1-a_2+a_3) (a_1+a_2+4 a_3), \\
		r_4 &=  a_1 a_2 (a_1^2+4 a_1 a_3-a_2 a_3), \\
		r_5 &= -a_1^2 a_2 (a_2-5 a_3).
\intertext{
	When $a_1 < a_2 = a_3$ we have:
}
		r_1' &= r_1\big|_{a_2 = a_3} =  -4 a_1 a_2^3,\\ 
		r_2' &= (r_2+r_3+r_4)\big|_{a_2 = a_3} = 2 a_1^2 a_2^2-2 a_1 a_2^3, \\ 
		r_3' &= r_5\big|_{a_2 = a_3} = 4 a_1^2 a_2^2.
\intertext{	
	When $a_1 = a_2 < a_3$ we have:
}
		r_1'' &= -2a_1^2a_3^2, \\ 
		r_2'' &= -2a_1^3 a_3 - 4a_1^2 a_3^2, \\ 
		r_3'' &= 8a_1^3 a_3. 
\intertext{
	Finally, when $a_1 = a_2 = a_3$, we have:
}
		r_1''' &= (r_1+r_2+r_3+r_4+r_5)\big|_{a_1 = a_2 = a_3} = 0. 
	\end{align*}

\end{lem}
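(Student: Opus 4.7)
The plan is to apply the recursive formulas from Theorem~\ref{thm:recursive-stuff} to the Boij-S\"oderberg decomposition of $\beta(a_1, a_2, a_3)$ obtained from the codimension-three structure theorem of \cite{GJMRSW}, handling the generic case $a_1 < a_2 < a_3$ first and then each of the three degenerate cases separately.

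For the generic case, $\beta(a_1, a_2, a_3)$ has $\elims = 5$ pure summands with degree sequences $\dd^1 = (0, a_1, a_1+a_2, a)$, $\dd^2 = (0, a_2, a_1+a_2, a)$, $\dd^3 = (0, a_2, a_1+a_3, a)$, $\dd^4 = (0, a_3, a_1+a_3, a)$, $\dd^5 = (0, a_3, a_2+a_3, a)$, and eliminated positions $(1, a_1), (2, a_1+a_2), (1, a_2), (2, a_1+a_3), (3, a)$ taken in order, each with $b_k = 1$. I would compute $p_k = \pure{\dd^k}_{i_k, j_k}$ and the cross-terms $\pure{\dd^s}_{i_k, j_k}$ for $s < k$ from the product formula defining $\pure{\dd}$; most cross-terms vanish because $\dd^s_{i_k} \neq j_k$, which keeps the recursion tractable. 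The base case $r_1 = (j_1 - a) z_1 = -(a_2 + a_3) \cdot a_1 a_2 (a_2 + a_3) = -a_1 a_2 (a_2 + a_3)^2$ is immediate, and the remaining four values follow from the recursion by algebraic simplification.

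For each degenerate case, I would apply Algorithm~\ref{StandardDecomp} to $\beta(a_1, a_2, a_3)$ directly (the number of summands drops because certain $\dd^s$ coincide) and then apply Theorem~\ref{thm:recursive-stuff} to the collapsed chain. For example, when $a_1 = a_2 < a_3$, the decomposition has three terms $(0, a_1, 2 a_1, a)$, $(0, a_1, a_1+a_3, a)$, $(0, a_3, a_1+a_3, a)$, and the first step eliminates $(2, 2 a_1)$ rather than $(1, a_1)$ as in the generic case (since $\beta_{1, a_1} = 2$ here); the recursion then yields $r_1'' = -2 a_1^2 a_3^2$ directly. The claimed identities expressing degenerate remainders as sums of specialized generic ones---such as $r_2' = (r_2 + r_3 + r_4)\big|_{a_2 = a_3}$---are verified by matching the symbolic expressions on both sides, and serve both as a compact way of recording the answer and as a useful sanity check against the direct computation. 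The case $a_1 = a_2 = a_3$ is degenerate in the extreme: the diagram is already pure, $\elims = 1$, and the recursion reduces to $r_1''' = (j_1 - a)z_1 = 0$ since $j_1 = a$.

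The main obstacle is the algebraic simplification in the generic case, where the formulas for $r_3, r_4, r_5$ involve sums of several rational expressions whose common denominators are products of the $a_i$'s and their pairwise sums $a_1+a_2, a_1+a_3, a_2+a_3, a$; recognizing each output as the tidy factored polynomial in the statement requires careful bookkeeping. I would perform these manipulations by hand and verify with a computer algebra system such as Macaulay2~\cite{M2}, which is already used elsewhere in the paper. No conceptual difficulty is expected beyond the recursion in Theorem~\ref{thm:recursive-stuff} itself.
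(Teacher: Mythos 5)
Your proposal is correct and mirrors the paper's proof essentially step for step: both invoke the codimension-three degree sequences, coefficients, and elimination orders from \cite{GJMRSW}, plug them into the recursion of Theorem~\ref{thm:recursive-stuff}, and treat the generic case $a_1<a_2<a_3$ followed by each degenerate case separately, noting in particular that when $a_1=a_2<a_3$ the elimination begins in column two. Your observations that $b_k=1$ in the generic case and that most cross-terms $\pure{\dd^s}_{i_k,j_k}$ vanish are exactly the simplifications the paper exploits, so no gap remains beyond the routine symbolic bookkeeping you already flag.
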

\begin{proof}
	Recall from Theorem~\ref{thm:recursive-stuff} that $r_k$ is defined recursively in terms of the previous $r_s$. In the case of codimension four we have $r_1 = (j_1 -a)z_1$ and
	\[
		r_k = \left(\frac{j_k-a}{p_k}b_k-\sum_{s = 1}^{k-1}\frac{\pure{\dd^s}_{i_k,j_k}}{p_k}r_s\right)
	\] 
	for $2 \leq k \leq 5$, where $a = a_1+a_2+a_3$ and $b_k, i_k, j_k, p_k$ are described in Remark~\ref{rmk:notation}. From \cite{GJMRSW} we have the degree sequences and coefficients from the decomposition of the Betti diagram of a codimension three complete intersection,
	\begin{align*}
		\dd^1 &= (0,a_1,a_1+a_2,a_1+a_2+a_3), & z_1 &= a_1a_2(a_2+a_3),\\
		\dd^2 &= (0,a_2,a_1+a_2,a_1+a_2+a_3), & z_2 &= a_1a_2(a_3-a_1), \\
		\dd^3 &= (0,a_2,a_1+a_3,a_1+a_2+a_3), & z_3 &= 2a_1a_2(a_1+a_3-a_2), \\
		\dd^4 &= (0,a_3,a_1+a_3,a_1+a_2+a_3), & z_2 &= a_1a_2(a_3-a_1), \\
		\dd^5 &= (0,a_3,a_2+a_3,a_1+a_2+a_3), & z_5 &= a_1a_2(a_2+a_3).
	\end{align*}
	Notice the elimination order in codimension three is not fixed. When $a_1 < a_2 < a_3$, we have the elimination order
	\begin{align*}
		i_1 &= 1, & j_1 &= a_1, \\
		i_2 &= 2, & j_2 &= a_1 + a_2, \\
		i_3 &= 1, & j_3 &= a_2, \\
		i_4 &= 2, & j_4 &= a_1 + a_3, \\
		i_5 &= 3, & j_5 &= a_1+a_2+a_3.
	\end{align*}

	When $a_1 < a_2 = a_3$, the degree sequences above collapse into three degree sequences
	\begin{align*}
		{\dd^1}' &= (0, a_1, a_1+a_2, a_1 + 2a_2), \\
		{\dd^2}' &= (0, a_2, a_1 + a_2, a_1 + 2a_2), \\
		{\dd^3}' &= (0,a_2, 2a_2, a_1 + 2a_2), 
	\end{align*}
	with elimination order:
	\[ 
		i_1 = 1, j_1 = a_1; \quad i_2 = 2, j_2 = a_1 + a_2; \quad i_3 = 3, j_3 = a_1 + 2a_2. 
	\]	
	These are consistent with the elimination order for $a_1 < a_2 < a_3$.

	When $a_1 = a_2 = a_3$, we are in the special case where $\elims = 1$, $${\dd^1}''' = (0, a_1, 2a_1, 3a_1),$$ and $i_1 = 3, j_3 = 3a_1$.  This is trivially consistent with the elimination order for $a_1 < a_2 < a_3$.

	However, when $a_1 = a_2 < a_3$, the degree sequences above collapse into three degree sequences
	\begin{align*} 
		{\dd^1}'' &= (0, a_1, 2a_1, 2a_1 + a_3), \\
		{\dd^2}'' &= (0,a_1, a_1+ a_3,2a_1 + a_3), \\
		{\dd^3}'' &= (0, a_3, a_1 + a_3, 2a_1 + a_3),
	\end{align*}
	with an elimination order that begins in column 2 instead of column 1:
	\[i_1 = 2, j_1 = 2a_1; \quad i_2 = 1, j_2 = a_1; \quad i_3 = 3, j_3 = 2a_1  + a_3. \]

	We proceed via two cases: when the elimination order is consistent with the general case $a_1 < a_2 < a_3$ and when it isn't, i.e., when $a_1 = a_2 < a_3$.

	Using the recursive formula, each $r_s$ can be explicitly written. The zeros below represent the instances in the sum where the respective degree sequence contained a zero in the position $i_s,j_s$. 
	\begin{align*}
		r_1 &= (j_1 - a)z_1, \\
		r_2 &= \frac{(j_2 - a)}{p_2}b_2 - \frac{\pure{\dd^1}_{i_2,j_2}}{p_2}r_1, \\
		r_3 &= \frac{(j_3 - a)}{p_3}b_3 \makebox[0pt][l]{ $-$ 0 \ }\phantom{- \frac{\pure{\dd^1}_{i_3,j_3}}{p_3}r_1} - \frac{\pure{\dd^2}_{i_3,j_3}}{p_3}r_2, \\
		r_4 &= \frac{(j_4 - a)}{p_4}b_4 \makebox[0pt][l]{ $-$ 0 \ }\phantom{- \frac{\pure{\dd^1}_{i_4,j_4}}{p_4}r_1} \makebox[0pt][l]{ $-$ 0 \ }\phantom{- \frac{\pure{\dd^2}_{i_4,j_4}}{p_4}r_2} - \frac{\pure{\dd^3}_{i_4,j_4}}{p_4}r_3, \\
		r_5 &= \frac{(j_5 - a)}{p_5}b_5 - \frac{\pure{\dd^1}_{i_5,j_5}}{p_5}r_1 - \frac{\pure{\dd^2}_{i_5,j_5}}{p_5}r_2 - \frac{\pure{\dd^3}_{i_5,j_5}}{p_5}r_3 - \frac{\pure{\dd^4}_{i_5,j_5}}{p_5}r_4. 
	\end{align*}
	Applying the appropriate recursive substitutions results in the desired formulas.

	When $a_1 < a_2 = a_3$ we have $\dd^2 = \dd^3 = \dd^4$ and $\elims(\bet^3)=3$ as described above.
	From this we have the following formulas for the remainders.
	\begin{align*}
		r_1' &= (j_1 - a)z_1, \\
		r_2' &= \frac{(j_2 - a)}{p_2}\cdot 2 - \frac{\pure{{\dd^1}'}_{i_2,j_2}}{p_2}r_1, \\
		r_3' &= \frac{(j_3 - a)}{p_3} - \frac{\pure{{\dd^1}'}_{i_3,j_3}}{p_3}r_1 - \frac{\pure{{\dd^2}'}_{i_3,j_3}}{p_5}r_2.
	\end{align*}
	Making the appropriate substitutions we obtain the desired formulas.  When $a_1 = a_2 = a_3$, similar rote manipulations yield that $r'''_1 = 0$.

	For the case when $a_1 = a_2 < a_3$ notice that $\dd^1 = \dd^2$ and $\dd^4 = \dd^5$ above. Once again we have $\elims(\bet^3) = 3$ but the elimination order is different than the previous two cases; in this case we start eliminating in the first column instead of the second column.  The details of the proof are similar to the previous cases.
\end{proof}

In order to simplify the next result, we define the following ratios between the remainders in Lemma~\ref{lem:codim4r} and the coefficients of the codimension three decomposition where $a_1 < a_2 < a_3$ \cite{GJMRSW} as 
\newcommand{\rat}[1]{\left({\frac{r}{z}}\right)_{#1}}
\begin{align*}
	\rat 1 &= -(a_2+a_3), \\ 
	\rat 2 &= \frac{a_1(a_2+2a_3)-a_3^2}{a_3 - a_1}, \\ 
	\rat 3 &= \frac{a_3(a_2+a_3) - a_1(a_1+3a_3)}{2a_1}, \\
	\rat 4 &= \frac{a_1(a_1 + 4a_3) - a_2(a_1+2a_3)}{(a_3-a_1)}, \\
	\rat 5 &= \frac{ a_2(5a_1a_3 + a_2^2)-({a}_1 {a}_2^2+{a}_1^2 {a}_3+{a}_1 {a}_3^2+{a}_2 {a}_3^2)}{a_2(a_2+a_3)}.\\
\intertext{Similarly, when $a_1 < a_2 = a_3$,}
\rat 1' &= -2a_2,\\ 
\rat 2' &= \frac{1}{2}(a_1-a_2),\\ 
\rat 3' &= 2a_1. \\ 
\intertext{When $a_1 = a_2 < a_3$, }
\rat 1'' &= -a_3,\\
\rat 2'' &= -(a_1+2a_3),\\
\rat 3'' &= 4a_1.\\
\intertext{Finally, when $a_1 = a_2 = a_3$,}
\rat 1''' &= 0.
\end{align*}

Using the above notation, we state the following partial classification theorem for the \BS decomposition of complete intersections in codimension four. 
	
\begin{thm}\label{thm:codim4}
	Let $S = \kk [x_1,x_2,x_3,x_4]$ and $I = (f_1,f_2,f_3,f_4)$ be an ideal generated by a homogeneous regular sequence with $\deg(f_i) = a_i$, where $a_i < a_{i+1}$ for all $i$.	If $a_4 \gg 0$, then the decomposition of $\bet(S/I)$ obtained from Algorithm~\ref{StandardDecomp} is completely determined by the degrees $a_1,a_2,a_3,a_4$.  In particular, we have the following decompositions broken down by cases. 

	Case 1: $a_1 < a_2 \leq a_3$ or $a_1 = a_2 = a_3$; when 
	\[
		a_4 > \max\left\{a_1+a_2+a_3,\rat 1, \rat 2, \rat 3, \rat 4, \rat 5\right\}
	\]
	(note that if $a_1 \leq a_2 = a_3$, this maximum is $a_1 +2a_2$), then:
	\begin{align*}
		\bet(S/I)
			&= a_1a_2 (a_2+a_3) (a_2+a_3+a_4) \cdot \pure{\ee^1}\\
			&- a_1a_2 (a_1 a_2+2 a_1 a_3-a_3^2+a_1 a_4-a_3 a_4)\cdot \pure{\ee^2} \\
			&+ a_1a_2 (a_1-a_2+a_3) (a_1+a_2+4 a_3+2 a_4) \cdot \pure{\ee^3} \\
			&- a_1a_2 (a_1^2+4 a_1 a_3-a_2 a_3+a_1 a_4-a_3 a_4) \cdot \pure{\ee^4}\\
			&+ a_1a_2 (a_1 a_2-5 a_1 a_3+a_2 a_4+a_3 a_4) \cdot \pure{\ee^5} \\
			&+ 6a_1a_2a_3 (a_1-a_3+a_4) \cdot \pure{\ee^6}\\
			&+ 6a_1a_2a_3 (a_1-a_3+a_4) \cdot \pure{\ee^7}\\			
			&+ a_1a_2 (a_1 a_2-5 a_1 a_3+a_2 a_4+a_3 a_4) \cdot \pure{\ee^8} \\			
			&- a_1a_2 (a_1^2+4 a_1 a_3-a_2 a_3+a_1 a_4-a_3 a_4) \cdot \pure{\ee^9}\\
			&+ a_1a_2 (a_1-a_2+a_3) (a_1+a_2+4 a_3+2 a_4) \cdot \pure{\ee^{10}} \\			
			&- a_1a_2 (a_1 a_2+2 a_1 a_3-a_3^2+a_1 a_4-a_3 a_4)\cdot \pure{\ee^{11}} \\
			&+ a_1a_2 (a_2+a_3) (a_2+a_3+a_4) \cdot \pure{\ee^{12}}
	\intertext{where the degree sequences $\ee^s$ are given by}
		\ee^1 &= (0, a_1, a_1+a_2, a_1+a_2+a_3, a_1+a_2+a_3+a_4), \\
		\ee^2 &= (0, a_2, a_1+a_2, a_1+a_2+a_3, a_1+a_2+a_3+a_4), \\
		\ee^3 &= (0, a_2, a_1+a_3, a_1+a_2+a_3, a_1+a_2+a_3+a_4), \\
		\ee^4 &= (0, a_3, a_1+a_3, a_1+a_2+a_3, a_1+a_2+a_3+a_4), \\
		\ee^5 &= (0, a_3, a_2+a_3, a_1+a_2+a_3, a_1+a_2+a_3+a_4), \\
		\ee^6 &= (0, a_3, a_2+a_3, a_1+a_2+a_4, a_1+a_2+a_3+a_4), \\
		\ee^7 &= (0, a_3, a_1+a_4, a_1+a_2+a_4, a_1+a_2+a_3+a_4), \\		
		\ee^8 &= (0, a_4, a_1+a_4, a_1+a_2+a_4, a_1+a_2+a_3+a_4), \\			
		\ee^9 &= (0, a_4, a_2+a_4, a_1+a_2+a_4, a_1+a_2+a_3+a_4), \\					
		\ee^{10} &= (0, a_4, a_2+a_4, a_1+a_3+a_4, a_1+a_2+a_3+a_4), \\							
		\ee^{11} &= (0, a_4, a_3+a_4, a_1+a_3+a_4, a_1+a_2+a_3+a_4), \\	
		\ee^{12} &= (0, a_4, a_3+a_4, a_2+a_3+a_4, a_1+a_2+a_3+a_4).
	\intertext{Case 2: $a_1 = a_2 < a_3$ and $a_4 \geq \max\{2a_1 + a_3,4a_1\}$,
    we have
	} 
	\bet(S/I)
	&= 2a_1^2a_3(a_3+a_4) \cdot \pure{\ee^1} \\
	&+ 2a_1^2a_3(a_1+2 a_3+a_4)\cdot \pure{\ee^2} \\
	&- 2a_1^2a_3(4 a_1-a_4) \cdot \pure{\ee^3} \\
	&+ 6a_1^2a_3(a_1-a_3+a_4) \cdot \pure{\ee^4} \\
	&+ 6a_1^2a_3(a_1-a_3+a_4) \cdot \pure{\ee^5} \\
	&- 2a_1^2a_3(4a_1-a_4) \cdot \pure{\ee^6} \\
	&+ 2a_1^2a_3(a_1+2 {a}_3+{a}_4) \cdot \pure{\ee^7} \\
	&+ 2a_1^2a_3(a_3+a_4) \cdot \pure{\ee^8}
	\intertext{
	where the degree sequenes $\ee^s$ are given by
	}
	\ee^1 &= (0,	a_1,	2a_1,	 2a_1+a_3 ,		2a_1+a_3+a_4),\\
	\ee^2 &= (0,	a_1,	a_1+a_3, 2a_1+a_3, 		2a_1+a_3+a_4),\\
	\ee^3 &= (0,	a_3,	a_1+a_3, 2a_1+a_3, 		2a_1+a_3+a_4),\\
	\ee^4 &= (0,	a_3,	a_1+a_3, 2a_1+a_4, 		2a_1+a_3+a_4),\\
	\ee^5 &= (0,	a_3,	a_1+a_4, 2a_1+a_4, 		2a_1+a_3+a_4),\\
	\ee^6 &= (0,	a_4,	a_1+a_4, 2a_1+a_4, 		2a_1+a_3+a_4),\\
	\ee^7 &= (0,	a_4,	a_1+a_4, a_1+a_3+a_4, 	2a_1+a_3+a_4),\\
	\ee^8 &= (0,	a_4,	a_3+a_4, a_1+a_3+a_4, 	2a_1+a_3+a_4).
	\end{align*}

\end{thm}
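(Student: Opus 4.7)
The plan is to apply Algorithm~\ref{NewAlgorithm} to $\bet(a_1,a_2,a_3,a_4)$, using as input the codimension-three decomposition of $\bet(a_1,a_2,a_3)$ recalled in the proof of Lemma~\ref{lem:codim4r}. Since no mass elimination ever occurs in codimension three, the hypotheses of Theorem~\ref{thm:recursive-stuff} hold, so the coefficients arising from \ref{p1} and \ref{p3} are forced to be $y_s = z_s a_4 - r_s$ for $1 \leq s \leq \elims$. Substituting the explicit $z_s$ and $r_s$ from Lemma~\ref{lem:codim4r} into this formula produces exactly the Phase~1 coefficients appearing in the theorem statement; for instance $y_1 = a_1 a_2 (a_2+a_3) a_4 + a_1 a_2 (a_2+a_3)^2 = a_1 a_2 (a_2+a_3)(a_2+a_3+a_4)$, and Lemma~\ref{lem:symmetry} then automatically duplicates these in Phase~3.

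Next I would treat the two \ref{p2} coefficients. Each is pinned down by the requirement (Step~\ref{p2.2}) that subtracting $y_s \pure{\ee^s}$ kills the topmost surviving entry of the indicated column. Writing down that entry of $\bet^{c+1}$, subtracting the contributions of the $\elims$ pure diagrams already deployed, and dividing by the relevant entry of $\pure{\ee^s}$ yields a closed form in $a_1, a_2, a_3, a_4$. Matching this with the prediction of Conjecture~\ref{conj:coeff} simultaneously verifies the $c+1 = 4$ instance of that conjecture (Corollary~\ref{cor:conj}) and produces the coefficient $6 a_1 a_2 a_3 (a_1 - a_3 + a_4)$ for both Phase~2 pure diagrams.

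The degree sequences in the theorem then emerge directly from Algorithm~\ref{NewAlgorithm}: Phase~1 sets $\ee^s = \concat(\dd^s, a_1+a_2+a_3+a_4)$ for $1 \leq s \leq \elims$; Phase~2 constructs the middle $\ee^s$ via the recipe in Step~\ref{p2.1}; and Phase~3 is obtained by $\ee^s = (\ee^{N-s+1})^\vee$ using Lemma~\ref{lem:symmetry}. A direct inspection confirms these match the listed $\ee^1, \ldots, \ee^{12}$ in Case~1, with the evident collapses when $a_2 = a_3$ or $a_1 = a_2 = a_3$. Using Lemma~\ref{lem:codim4r} one checks term by term that each $\rat s$ (and each $\rat s'$, $\rat s''$, $\rat s'''$) equals $r_s/z_s$ after cancellation, so the bound on $a_4$ in the statement coincides exactly with the bound $\max\{a, r_1/z_1, \ldots, r_\elims/z_\elims\}$ appearing in Theorem~\ref{thm:recursive-stuff} and Corollary~\ref{cor:algisBS}. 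Under this bound Proposition~\ref{lem:chain} shows the $\ee^s$ form a chain and all $y_s$ are positive, so Corollary~\ref{cor:algisBS} identifies the output with the Boij--S\"oderberg decomposition. Case~2 ($a_1 = a_2 < a_3$) proceeds the same way using the collapsed codimension-three data and the remainders $r_s''$; here $\elims = 3$, giving $2\elims + (c-1) = 8$ terms, and the slightly different codimension-three elimination order (which begins in column~2) is tracked through the algorithm to produce the listed $\ee^1, \ldots, \ee^8$.

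The principal obstacle I foresee is the Phase~2 computation. Even though it is elementary linear algebra, one must carefully identify the target entry in each of the two middle columns, correctly assemble the contributions from all $\elims$ previously applied pure diagrams, and then recognize the cancellations that collapse the result into the clean product $6 a_1 a_2 a_3 (a_1 - a_3 + a_4)$. A secondary but bookkeeping-heavy task is the term-by-term verification that each $\rat s$ listed before the theorem really is $r_s/z_s$ after simplification, so that the stated upper bound on $a_4$ matches precisely the bound supplied by Corollary~\ref{cor:algisBS}; this reduces to rational algebraic manipulation using the codimension-three $z_s$ and the $r_s$ from Lemma~\ref{lem:codim4r}.
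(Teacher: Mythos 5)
Your proposal follows essentially the same route as the paper: invoke Theorem~\ref{thm:recursive-stuff} and Corollary~\ref{cor:algisBS} to justify that Algorithm~\ref{NewAlgorithm} yields the Boij--S\"oderberg decomposition when $a_4$ exceeds the stated bound, read off the \ref{p1} coefficients from $y_s = z_s a_4 - r_s$ using Lemma~\ref{lem:codim4r}, obtain the \ref{p3} data by the symmetry of Lemma~\ref{lem:symmetry}, compute the \ref{p2} coefficient by the elimination equation in the relevant column, and handle Case~2 via the different codimension-three elimination order. The only cosmetic difference is the framing of the \ref{p2} step: you propose to derive $y_6$ directly from the elimination condition and then observe it matches Conjecture~\ref{conj:coeff}, whereas the paper takes the conjectured $y_6$ and verifies it satisfies the elimination condition $y_5\,\pure{\ee^5}_{2,a_2+a_3} + y_6\,\pure{\ee^6}_{2,a_2+a_3} = 1$; these are the same linear equation solved in the same way. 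One small point to tighten: the paper is explicit about how the five codimension-three degree sequences collapse (to three when $a_1 < a_2 = a_3$ or $a_1 = a_2 < a_3$, to one when $a_1 = a_2 = a_3$) and how the coefficients must be grouped before comparing to the Phase~2 elimination equation, which you gesture at with "evident collapses" but would need to carry out carefully to produce a complete argument for all sub-cases of Case~1.
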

\begin{proof}
	Before showing that these are the correct decompositions, we observe that when $a_1 < a_2 = a_3$, $\max\left\{a_1+2a_2,\rat 1',\rat 2', \rat 3'\right\} = a_1+2a_2$;
	when $a_1 = a_2 < a_3$, $\max\left\{2a_1+a_3,\rat 1'',\rat 2'',\rat 3''\right\} = \max\left\{2a_1+a_3,4a_1\right\};$
	and when $a_1 = a_2 = a_3$, $\max\left\{3a_1,\rat 1'''\right\} = 3a_1$.  Thus, in each case, $a_4$ satisfies the hypotheses of Theorem~\ref{thm:recursive-stuff}.  This means that the elimination order has stabilized and Algorithm~\ref{NewAlgorithm} aligns with Algorithm~\ref{StandardDecomp}. 

	To calculate the degree sequences $\ee^s$, we use the degree sequences from the decomposition of $\bet(S/(f_1,f_2,f_3))$ found in \cite{GJMRSW},
	\begin{align*}
		\dd^1 &= (0, a_1, a_1+a_2, a_1+a_2+a_3), \\
		\dd^2 &= (0, a_2, a_1+a_2, a_1+a_2+a_3), \\
		\dd^3 &= (0, a_2, a_1+a_3, a_1+a_2+a_3), \\
		\dd^4 &= (0, a_3, a_1+a_3, a_1+a_2+a_3), \\
		\dd^5 &= (0, a_3, a_2+a_3, a_1+a_2+a_3).		
	\end{align*}
	These degree sequences are what defines the two cases above. In particular, Case 1 and Case 2 are derived from the different elimination orders of codimension three complete intersections. Focusing on Case 1, notice when $a_1 < a_2 < a_3$, \ref{p1} of Algorithm~\ref{NewAlgorithm} produces the desired $\ee^s$ for $1\ls s \ls 5$, while \ref{p2} produces $\ee^6$. 
	By duality we see that $\ee^s = (\ee^{13-s})^\vee$ for $7 \ls s \ls 12$. Similar observations produce the results for the remaining instances of Case 1.

	Because the coefficients are symmetric, we only need to calculate the first six to show Case 1. By Theorem \ref{thm:recursive-stuff} we know that the coefficients from \ref{p1} are positive and are a function of the degrees. In particular, for $1 \ls s \ls 5$:
	\allowdisplaybreaks 
	\begin{align*}		
		y_1 &= a_1a_2(a_2+a_3)\cdot a_4 - r_1 \\
			&= a_1a_2(a_2+a_3)\cdot a_4 - (-a_1a_2({a_2+a_3})^2) \\
			&= a_1a_2 (a_2+a_3) (a_2+a_3+a_4); \\ \\
		y_2 &=a_1a_2(a_3-a_1)\cdot a_4 - r_2 \\
			&=a_1a_2(a_3-a_1)\cdot a_4 - (a_1a_2 (a_1 a_2+2 a_1 a_3-a_3^2))	\\
			&= - a_1a_2 (a_1 a_2+2 a_1 a_3-a_3^2+a_1 a_4-a_3 a_4); \\ \\
		y_3 &= 2a_1a_2(a_1+a_3-a_2)\cdot a_4 - r_3 \\
			&= 2a_1a_2(a_1+a_3-a_2)\cdot a_4 - (-a_1a_2 (a_1-a_2+a_3) (a_1+a_2+4 a_3)) \\
			&= a_1a_2 (a_1-a_2+a_3) (a_1+a_2+4 a_3+2 a_4); \\ \\
		y_4 &= a_1a_2(a_3-a_1)\cdot a_4 - r_4 \\
			&= a_1a_2(a_3-a_1)\cdot a_4 - (a_1a_2 (a_1^2+4 a_1 a_3-a_2 a_3)) \\
			&= - a_1a_2 (a_1^2+4 a_1 a_3-a_2 a_3+a_1 a_4-a_3 a_4); \\ \\
		y_5 &= a_1a_2(a_2+a_3)\cdot a_4 - r_5 \\
			&= a_1a_2(a_2+a_3)\cdot a_4 - (- a_1^2a_2(a_2-5 a_3)) \\
			&= a_1a_2 (a_1 a_2-5 a_1 a_3+a_2 a_4+a_3 a_4).
	\end{align*}
	\allowdisplaybreaks[0] 

	To complete the proof of Case 1, we only need to show the coefficient $y_6$ aligns with Conjecture~\ref{conj:coeff}. According to the elimination order, $y_6\cdot \pure{\ee^6}$ targets the entry of $\bet_4$ in the third column with degree $a_2+a_3$. As $a_1 < a_2 <a_3$, only two degree sequences in the chain contribute to this entry, $\ee^5$ and $\ee^6$. 

	The desired entries $\pure{\ee^5}_{2,a_2+a_3}$ and $\pure{\ee^6}_{2,a_2+a_3}$ are
	\[
		\frac1{a_2a_1(a_2+a_3)(a_1+a_4)} \text{ and } \frac1{a_2(a_2+a_3)(a_1-a_3+a_4)(a_1+a_4)}		
	\]
	respectively. Summing these quantities $y_5\cdot\pure{\ee^5}_{2,a_2+a_3} + y_6\cdot\pure{\ee^6}_{2,a_2+a_3}$ gives
	\[		
		 \frac{a_1a_2 (a_1 a_2-5 a_1 a_3+a_2 a_4+a_3 a_4)}{a_2a_1(a_2+a_3)(a_1+a_4)} + \frac{6a_1a_2a_3 (a_1-a_3+a_4)}{a_2(a_2+a_3)(a_1-a_3+a_4)(a_1+a_4)} = 1,
	\]
	the desired result. 

	When $a_1 < a_2 = a_3$, ${\ee^3}' = \ee^5$, ${\ee^4}'=\ee^6$ are the only relevant sequences. As such, the above calculations produce the desired result. For the remaining instance of Case 1, $a_1 = a_2 = a_3$, we have $\elims = 1$ and hence 
	\begin{align*}
		{\ee^1}'&=\ee^1=\ee^2=\ee^3=\ee^4=\ee^5; \\
		{\ee^2}'&=\ee^6
	\end{align*}
	are the only contributing degree sequences to the position $2,a_2+a_3$. The respective coefficients to $\pure{{\ee^1}'}$ is 
	\[
		y_1+y_1+y_3+y_4+y_5=6a_1^3a_4.
	\]
	Summing $6a_1^3a_4\cdot\pure{{\ee^1}'}_{2,a_2+a_3}$ and $y_6 \cdot \pure{{\ee^2}'}_{2,a_2+a_3}$ gives
	\[
		\frac{6a_1^3a_4}{2a_1^3(a_1+a_4)} + \frac{6a_1a_2a_3 (a_1-a_3+a_4)}{2a_1^2a_4(a_1+a_4)}=3,
	\]
	the desired result.

	Similar computations complete Case 2.
\end{proof}

	Combining the results from \cite{GJMRSW} with the above theorem yields the following.

\begin{cor}\label{cor:conj}
	Conjecture \ref{conj:coeff} holds in codimension $c \ls 3$. 
\end{cor}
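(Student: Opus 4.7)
The plan is to verify Conjecture~\ref{conj:coeff} case-by-case according to the input codimension $c$, drawing on the structure theorem of \cite{GJMRSW} for the codimension-three output and on Theorem~\ref{thm:codim4} for the codimension-four output. The case $c=1$ is vacuous, since the range $\elims < s < \elims + c$ is empty; this leaves $c=2$ and $c=3$ as the genuinely nontrivial cases.

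For $c=2$, Corollary~\ref{cor:algisBS} ensures that once $a_3$ exceeds the bound of Theorem~\ref{thm:recursive-stuff}, Algorithm~\ref{NewAlgorithm} reproduces the Boij-S\"oderberg decomposition of $\bet(a_1,a_2,a_3)$ recorded in \cite{GJMRSW} and reprinted in the proof of Lemma~\ref{lem:codim4r}. By the symmetry of Lemma~\ref{lem:symmetry}, Phase~1 and Phase~3 account for the outer dual pairs of coefficients, so the lone Phase~2 coefficient is identified by elimination with the middle term: in the generic situation $a_1<a_2$ this is $z_3=2a_1a_2(a_1-a_2+a_3)$, which matches the conjecture's prediction $2!\cdot a_1a_2\cdot(a_3-(a_2-a_1))$. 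The degenerate subcase $a_1=a_2$ collapses the input decomposition so $\elims=1$ and the Phase~2 index shifts to $s=2$; because the conjecture depends only on the difference $s-\elims$, the same formula is recovered.

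For $c=3$, the plan is to read the Phase~2 coefficients directly off Theorem~\ref{thm:codim4}. In Case~1 ($a_1<a_2\le a_3$ or $a_1=a_2=a_3$), both Phase~2 coefficients $y_6,y_7$ equal $6a_1a_2a_3(a_1-a_3+a_4)$, and substituting $s-\elims\in\{1,2\}$ into the conjecture yields $6a_1a_2a_3(a_4-(a_3-a_1))$ in both instances---the $s-\elims=2$ evaluation telescopes because the term $a_2-a_2$ vanishes. In Case~2 ($a_1=a_2<a_3$), the Phase~2 coefficients become $y_4=y_5=6a_1^2a_3(a_1-a_3+a_4)$, and a parallel substitution (with $\elims=3$ and $a_1=a_2$ collapsing the middle telescoping term) confirms the match.

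The main obstacle is bookkeeping across the various degenerate configurations of the codimension-three input, since $\elims$ and the Phase~2 indices shift as degree sequences coincide; one must also check that the inequality defining the ``sufficiently large'' regime of Theorem~\ref{thm:recursive-stuff} reduces correctly in each degeneracy so that Algorithm~\ref{NewAlgorithm} really does agree with Algorithm~\ref{StandardDecomp}. Once the correct indices and bounds are identified in each subcase, verifying the conjecture amounts to direct substitution into the closed-form expressions already produced in Theorem~\ref{thm:codim4} and Lemma~\ref{lem:codim4r}, with no further nontrivial computation.
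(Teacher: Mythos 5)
Your proposal is correct and follows the same route as the paper's one-line proof: for input codimension $c=2$ the decomposition of $\beta(a_1,a_2,a_3)$ is read from the structure theorem of \cite{GJMRSW}, for $c=3$ the Phase~2 coefficients are read off Theorem~\ref{thm:codim4}, and the formula of Conjecture~\ref{conj:coeff} (with the index $n$ in the sum interpreted, correctly, as $\elims$) is verified by direct substitution, including the telescoping cancellation $a_{c-1}-a_2=0$ at $s-\elims=2$. The paper merely states "Combining the results from \cite{GJMRSW} with the above theorem yields the following," so your write-up actually supplies more detail, in particular noting the vacuity for $c=1$ and the bookkeeping across degenerate degree configurations.
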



\begin{thebibliography}{GJM{\etalchar{+}}15}

\bibitem[AGHS17]{reu}
Michael~T. Annunziata, Courtney~R. Gibbons, Cole Hawkins, and Alexander~J.
  Sutherland, \emph{Rational combinations of betti diagrams of complete
  intersections}, Journal of Algebra and Its Applications \textbf{to appear}
  (2017).

\bibitem[BS08]{BS1}
Mats Boij and Jonas S{\"o}derberg, \emph{Graded {B}etti numbers of
  {C}ohen-{M}acaulay modules and the multiplicity conjecture}, J. Lond. Math.
  Soc. (2) \textbf{78} (2008), no.~1, 85--106. \MR{2427053 (2009g:13018)}

\bibitem[BS12]{BS2}
\bysame, \emph{Betti numbers of graded modules and the multiplicity conjecture
  in the non-{C}ohen-{M}acaulay case}, Algebra Number Theory \textbf{6} (2012),
  no.~3, 437--454. \MR{2966705}

\bibitem[EKKS15]{EKS}
Sabine El~Khoury, Manoj Kummini, and Hema Srinivasan, \emph{Bounds for the
  multiplicity of {G}orenstein algebras}, Proc. Amer. Math. Soc. \textbf{143}
  (2015), no.~1, 121--128. \MR{3272737}

\bibitem[Erm10]{ermanHorrocks}
Daniel Erman, \emph{A special case of the {B}uchsbaum-{E}isenbud-{H}orrocks
  rank conjecture}, Math. Res. Lett. \textbf{17} (2010), no.~6, 1079--1089.
  \MR{2729632}

\bibitem[ES09]{ES1}
David Eisenbud and Frank-Olaf Schreyer, \emph{Betti numbers of graded modules
  and cohomology of vector bundles}, J. Amer. Math. Soc. \textbf{22} (2009),
  no.~3, 859--888. \MR{2505303 (2011a:13024)}

\bibitem[ES16]{ES}
Daniel Erman and Steven~V Sam, \emph{Questions about {B}oij-{S\"o}derberg
  theory}, Algebraic geometry, bootcamp volume, Proc. Sympos. Pure Math., To
  appear, 2016.

\bibitem[Fl{\o}12]{floystad-expos}
Gunnar Fl{\o}ystad, \emph{Boij-{S}\"oderberg theory: introduction and survey},
  Progress in commutative algebra 1, de Gruyter, Berlin, 2012, pp.~1--54.
  \MR{2932580}

\bibitem[FLS16]{samGrass}
Nicolas Ford, Jake Levenson, and Steven~V Sam, \emph{Towards
  {B}oij-{S\"o}derberg theory for {G}rassmannians: the case of square
  matrices}, arXiv:1608.04058v1 (2016).

\bibitem[GJM{\etalchar{+}}15]{GJMRSW}
Courtney Gibbons, Jack Jeffries, Sarah Mayes, Claudiu Raicu, Branden Stone, and
  Bryan White, \emph{Non-simplicial decompositions of {B}etti diagrams of
  complete intersections}, J. Commut. Algebra \textbf{7} (2015), no.~2,
  189--206. \MR{3370483}

\bibitem[GS]{M2}
Daniel~R. Grayson and Michael~E. Stillman, \emph{Macaulay2, a software system
  for research in algebraic geometry}, Available at
  \href{http://www.math.uiuc.edu/Macaulay2/}%
  {http://www.math.uiuc.edu/Macaulay2/}.

\bibitem[HK84]{HK}
J.~Herzog and M.~K\"uhl, \emph{On the {B}etti numbers of finite pure and linear
  resolutions}, Comm. Algebra \textbf{12} (1984), no.~13-14, 1627--1646.
  \MR{743307}

\bibitem[HS98]{HS}
J{\"u}rgen Herzog and Hema Srinivasan, \emph{Bounds for multiplicities}, Trans.
  Amer. Math. Soc. \textbf{350} (1998), no.~7, 2879--2902. \MR{1458304
  (99g:13033)}

\bibitem[McC12]{McC}
Jason McCullough, \emph{A polynomial bound on the regularity of an ideal in
  terms of half of the syzygies}, Math. Res. Lett. \textbf{19} (2012), no.~3,
  555--565. \MR{2998139}

\bibitem[MT15]{mayestang}
Sarah Mayes-Tang, \emph{Stabilization of {B}oij-{S\"o}derberg decompositions of
  ideal powers}, arXiv:1509.08544v1 (2015).

\bibitem[NS13]{NS}
Uwe Nagel and Stephen Sturgeon, \emph{Combinatorial interpretations of some
  {B}oij--{S}\"oderberg decompositions}, J. Algebra \textbf{381} (2013),
  54--72. \MR{3030509}

\bibitem[Whi14]{whieldon}
Gwyneth Whieldon, \emph{Stabilization of {B}etti tables}, J. Commut. Algebra
  \textbf{6} (2014), no.~1, 113--126. \MR{3215565}

\end{thebibliography}

\newcommand{\etalchar}[1]{$^{#1}$}
\providecommand{\bysame}{\leavevmode\hbox to3em{\hrulefill}\thinspace}
\providecommand{\MR}{\relax\ifhmode\unskip\space\fi MR }
\providecommand{\MRhref}[2]{%
  \href{http://www.ams.org/mathscinet-getitem?mr=#1}{#2}
}
\providecommand{\href}[2]{#2}

\end{document}